\numberwithin{equation}{section}
\theoremstyle{plain}
\newtheorem{thm}{Theorem}[section]
\newtheorem{lem}[thm]{Lemma}
\newtheorem*{euc*}{Euclidean division}
\newtheorem*{fek*}{Fekete's Lemma}
\newtheorem*{kin*}{Kingman's Subadditive Ergodic Theorem}
\newtheorem*{fur*}{Furstenberg-Kesten Theorem}
\theoremstyle{definition}
\newtheorem{rem}[thm]{Remark}
\newtheorem*{rem*}{Remark}
\newcommand{\dd}{\mathrm{d}}
\newcommand{\ii}{\mathrm{i}}
\renewcommand{\Im}{\operatorname{Im}}
\renewcommand{\Re}{\operatorname{Re}}
\newcommand{\N}{\mathbb{N}}
\newcommand{\Z}{\mathbb{Z}}
\newcommand{\R}{\mathbb{R}}
\newcommand{\C}{\mathbb{C}}
\newcommand{\T}{\mathbb{T}}
\newcommand{\cN}{\mathcal{N}}
\newcommand{\cA}{\mathcal{A}}
\newcommand{\cS}{\mathcal{S}}
\newcommand{\cB}{\mathcal{B}}
\newcommand{\cR}{\mathcal{R}}
\newcommand{\eps}{\epsilon}
\DeclareMathOperator{\supp}{supp}
\DeclareMathOperator{\varnbi}{Var_{nb}^{\mathit I}}
\DeclareMathOperator{\vari}{Var^{\mathit I}}
\DeclareSymbolFont{extraup}{U}{zavm}{m}{n}
\DeclareMathSymbol{\varheart}{\mathalpha}{extraup}{86}
\DeclareMathSymbol{\vardiamond}{\mathalpha}{extraup}{87}
\title{Quantum ergodicity for large equilateral quantum graphs}
\author{Maxime Ingremeau, Mostafa Sabri, Brian Winn}
\address{Universit\'e de Strasbourg, CNRS, IRMA UMR 7501, F-67000 Strasbourg,
France.}
\address{Laboratoiree J. A. Dieudonn\'e, UMR CNRS-UNS 7351, Universit\'e
  de Nice Sophia-Antipolis, 06108 Nice, France.}
\email{maxime.ingremeau@univ-cotedazur.fr}
\address{Department of Mathematics, Faculty of Science, Cairo University, Cairo 12613, Egypt.}
\address{Laboratoire de Math\'ematiques d'Orsay, Univ. Paris-Sud, CNRS, Universit\'e Paris-Saclay, 91405 Orsay, France.}
\email{mmsabri@sci.cu.edu.eg}
\address{Department of Mathematical Sciences, Loughborough University, Leicestershire, LE11 3TU, United Kingdom.}
\email{b.winn@lboro.ac.uk}
\subjclass[2010]{Primary  58J51. Secondary 34B45, 81Q10.}
\keywords{Quantum ergodicity, quantum graphs, large graphs, delocalization.}
\newlength{\temp@wc@width}
\newlength{\temp@wc@height}
\newcommand{\widecheck}[1]{%
  \setlength{\temp@wc@width}{\widthof{$#1$}}%
  \setlength{\temp@wc@height}{\heightof{$#1$}}%
  #1\hspace{-\temp@wc@width}%
  \raisebox{\temp@wc@height+2pt}[\heightof{$\widehat{#1}$}]%
     {\rotatebox[origin=c]{180}{\vbox to 0pt{\hbox{$\widehat{\hphantom{#1}}$}}}}%
}
\begin{document}

\begin{abstract}
Consider a sequence of finite regular graphs converging, in the sense of Benjamini-Schramm, to the infinite regular tree. We study the induced quantum graphs with equilateral edge lengths, Kirchhoff conditions (possibly with a non-zero coupling constant $\alpha$) and a symmetric potential $U$ on the edges. We show that in the spectral regions where the infinite quantum tree has absolutely continuous spectrum, the eigenfunctions of the converging quantum graphs satisfy a quantum ergodicity theorem. In case $\alpha=0$ and $U=0$, the limit measure is the uniform measure on the edges. In general, it has an explicit $C^1$ density. We finally prove a stronger quantum ergodicity theorem involving integral operators, the purpose of which is to study eigenfunction correlations.
\end{abstract}

\maketitle

\section{Introduction}
Quantum ergodicity, one of the fundamental theorems of quantum chaos, is a result about spatial delocalization of eigenfunctions. In its original context \cite{Shni, CdV,Zel}, it says that the eigenfunctions of the Laplace-Beltrami operator on a compact Riemannian manifold become equidistributed on the manifold in the high energy limit, provided the geodesic flow is ergodic.

In their influential work \cite{KS97}, Kottos and Smilansky suggested that the ideas and results of quantum chaos should apply to quantum graphs. It is therefore natural to inquire whether quantum ergodicity holds on quantum graphs. By a quantum graph, we mean a metric graph, equipped with a differential operator and suitable boundary conditions at each vertex. We refer the reader to Section~\ref{sec:DefQG} for a more precise definition.

Let us briefly survey some earlier results of quantum ergodicity on quantum graphs.

The first investigation appeared in \cite{BKW04}, where it was shown that the eigenfunctions of star graphs are not quantum ergodic in the high energy limit. More precisely, by studying the behaviour of the semi-classical measure when the number of bonds becomes large and the lengths approach a common value, one sees that it is not the uniform measure.

A positive result of quantum ergodicity later appeared in \cite{BKS07} for graphs associated to intervals maps. In that paper, instead of studying directly the high energy behaviour of the eigenfunctions $\psi_n$ of the quantum graph, the authors study the eigenfunctions $\phi_j(\lambda_n)$ of associated unitary operators $U(\lambda_n)$ which encode the classical evolution. It is shown that both notions of quantum ergodicity (for $\psi_n$ or $\phi_j(\lambda_n)$) are intimately related if the size of the graph goes to infinity, see also \cite{BeWi10} for a precise statement.

The previous result can serve as a motivation to consider \emph{asymptotic quantum ergodicity} for general quantum graphs. In this case, one considers a sequence of quantum graphs, first studying the high energy behaviour, then taking the size of the graph to infinity. A general strategy for studying this regime was introduced in \cite{GKP10}, and it is shown that the validity of asymptotic quantum ergodicity (AQE) depends on the spectral properties of the Markov transition matrix $M_{b,b'}(\lambda) = |U_{b,b'}(\lambda)|^2$. Namely, AQE should hold if the spectral gap of $M(\lambda)$ does not decay too fast as the graph grows larger. This can be regarded as a ``chaotic'' assumption, similar to the ergodicity of the geodesic flow on manifolds. Note that the spectral gap of quantum star graphs closes quite rapidly. 

It is natural however to study the high energy behaviour of a \emph{fixed} compact quantum graph. This question was settled later in \cite{CdV15} in the case of Kirchhoff boundary conditions, where it is shown that quantum ergodicity does not hold for a generic metric on the graph, except if it is homeomorphic to an interval or a circle. The semi-classical measures were moreover characterized, and the ``scars'' were described.

To conclude this brief survey, we mention the paper \cite{KaSc14} in which bounds on the entropy of the semi-classical measures were obtained for several families of quantum graphs. Finally, the paper \cite{BrWi16} proved that asymptotic quantum ergodicity holds for sequences of quantum graphs without back-scattering, if the underlying discrete graphs are $(q+1)$-regular expanders with few short loops. Under these ``equi-transmitting'' boundary conditions (which do not include the Kirchhoff conditions), the Markov operator $M(\lambda)$ is just $\frac{1}{q} \cB$, where $(\cB f)(b) = \sum_{b^+\in \cN_b^+} f(b^+)$ is the non-backtracking operator on the edges. It is known that if the graphs are expanders, then $\frac{1}{q}\cB$ has a uniform spectral gap \cite{ALM,A}. So this result of \cite{BrWi16} confirms the general philosophy of \cite{GKP10} in this situation. 

All the previous results were concerned with the high-energy behaviour of eigenfunctions, be it for a sequence of quantum graphs or a fixed graph. In this paper, we are interested in a different regime. Ultimately, we believe the following principle should be true~: suppose that an infinite, possibly random, quantum tree has purely absolutely continuous spectrum in an interval $I$. Consider a sequence of quantum graphs converging to this tree in the sense of Benjamini-Schramm, and suppose the underlying discrete graphs are expanders. Then as the graphs $\mathbf{G}_N$ grow large, the corresponding eigenfunctions whose energies \emph{lie in the bounded interval $I$} will satisfy a quantum ergodicity theorem.

This kind of result can be interpreted as a delocalization result for the infinite quantum tree. It says that if the tree is \emph{spectrally delocalized} (has pure AC spectrum in $I$), then it also has a form of \emph{spatial delocalization} (converging quantum graphs satisfy quantum ergodicity in $I$).

This point of view originally stemmed from the study of quantum ergodicity for discrete graphs. The first incarnations of it appeared in the proof of quantum ergodicity for the adjacency matrix of a regular graph \cite{ALM,A,BLL}. The question was later addressed in full generality in \cite{AS2}, in the framework of Schr\"odinger operators on graphs of bounded degrees. As an important application \cite{AS3}, it was shown that the eigenfunctions of the Anderson model on a large graph become equidistributed if the disorder is weak enough, in the region of AC spectrum previously provided by Klein \cite{Klein}. This was among very few theorems of delocalization for the Anderson model, the opposite regime of localization being quite well understood today. More applications appeared in \cite{AS4}, where the first results of quantum ergodicity for non-regular graphs were given. We also mention the recent paper \cite{LMS17} which studies quantum ergodicity for sequences of compact hyperbolic surfaces, also in the bounded interval regime.

The present paper is a first step towards the proof of such a general criterion for quantum graphs. We study a simple family of quantum graphs~: regular equilateral graphs, and endow them with the natural Kirchhoff conditions. We also allow for identical coupling constants $\alpha$ on the vertices and identical symmetric potentials $U$ on the edges. It is known \cite{Car97} that the corresponding infinite tree has bands of AC spectra. We confirm the general principle by showing that eigenfunctions with energies in such bands are quantum ergodic, as the graph grow large. More precisely, when the graph $\mathbf{G}_N$ is large enough, the probability measure $|\psi_j^{(N)}(x)|^2\,\dd x$ on $\mathbf{G}_N$ approaches an explicit measure $\frac{1}{N} \Psi_{\lambda_j}(x)\,\dd x$, for most eigenfunctions $\psi_j^{(N)}$. In the special case $U=\alpha=0$, the density $\Psi_{\lambda_j}(x)$ is just a constant and we obtain the uniform measure $\frac{2}{(q+1)N}\,\dd x$ on $\mathbf{G}_N$ (see \S~\ref{sec:consequences} for more precision). In general, $\Psi_{\lambda_j}(x)$ is essentially the density of the spectral measure of the infinite quantum tree. The equidistribution in case $U=\alpha=0$ holds precisely because this spectral measure is constant in this case, as $x$ varies along the tree.

Our paper is organized as follows. In Section~\ref{sec:mainres} we introduce our quantum graphs and discuss the main results. In Section~\ref{sec:relations}, we construct discrete functions on the directed bonds of $G_N$, which turn out to be eigenfunctions of a non-backtracking operator $\cB$. We then prove our main result in Section~\ref{sec:equiregproof}. As in the case of combinatorial graphs \cite{A,ALM,AS2}, one can also ask about the behaviour of the eigenfunction correlators $\overline{\psi_j^{(N)}(x)}\psi_j^{(N)}(y)\,\dd x\,\dd y$. For this purpose, we provide a more general quantum ergodicity result, Theorem~\ref{thm:integral}, involving integral operators.  
This result is proved in Section~\ref{sec:integralop}.

We plan to address the general criterion for quantum ergodicity in such a bounded interval regime in a future work. It should be expected that the proof will become much more difficult and technical in this case. A very interesting application would be to prove quantum ergodicity for weak random perturbations of the equilateral model we study here, whether in the edge lengths or the coupling constants. In fact, it was shown in \cite{ASW06} that the AC spectrum of the Laplace operator on the equilateral quantum tree $\mathbf{T}_q$, is stable under weak random perturbations of the edge lengths.

\section{Presentation of the results}\label{sec:mainres}

\subsection{Background on quantum graphs}\label{sec:DefQG}

Let $G=(V,E)$ be a (combinatorial) graph, and fix $L>0$. The associated equilateral metric graph, denoted by $\mathbf{G}$, is obtained by identifying each edge with $[0,L]$. In doing so, we fix an orientation on each $e\in E$, identifying $0$ with an origin $o_e\in V$ and $L$ with a terminus $t_e\in V$. We shall write $d_\mathbf{G}$ for the distance on $\mathbf{G}$ induced by the usual distance on $[0,L]$.

A \emph{quantum graph} is a self-adjoint differential operator acting on the Hilbert space $\mathcal{H}:=\mathop\bigoplus_{e\in E} L^2[0,L]$. We consider the following Schr\"odinger operator.

Let $U\in L^\infty(0,L)$ be a real-valued potential satisfying
\[
U(L-x) = U(x) \,.
\]
We define an operator $H_\mathbf{G}$ acting on $\psi= (\psi_e)_{e\in E}\in  \mathop\bigoplus_{e\in E} W^{2,2}(0,L)\subset \mathcal{H}$ by 
\begin{equation}\label{e:H}
(H_{\mathbf{G}}\psi_e)(x_e) = -\psi''_e(x_e) + U(x_e)\psi_e(x_e).
\end{equation}

Thus defined, $H_{\mathbf{G}}$ is not essentially self-adjoint: we need to impose suitable boundary conditions at each vertex. In this paper, we will consider the generalized \emph{Kirchhoff boundary conditions} with parameter $\alpha\in \R$ given by
\begin{itemize}
\item \textbf{Continuity}: For all $v\in V$ and all $e,e'\in E$, we have $\psi_e(t_e) = \psi_{e'}(o_{e'}) =:\psi(v)$ if $t_e=o_{e'}=v$.
\item \textbf{Current conservation:} For all $v\in V$, 
\begin{equation*}
\sum_{e:o_e=v}\psi_e'(0) - \sum_{e:t_e=v} \psi_e'(L) = \alpha \psi(v).
\end{equation*}
\end{itemize}

If $G$ is finite, $H_{\mathbf{G}}$ is self-adjoint on the domain of functions in $\bigoplus_{e\in E}W^{2,2}(0,L)$ satisfying the Kirchhoff conditions. When $G=\T_q$ is the infinite tree, $H_{\mathbf{T}_q}$ is essentially self-adjoint on the set of compactly supported functions in $\bigoplus_{e\in E}W^{2,2}(0,L)$ satisfying the Kirchhoff conditions \cite{Car97}, and can hence be uniquely extended to a self-adjoint operator. We refer the reader to \cite{BeKu13} for more details on the construction and properties of quantum graphs.

In the sequel, we will consider sequences of finite graphs $G_N=(V_N,E_N)$, which we will suppose to be $(q+1)$-regular for some $q\geq 2$, meaning that
all vertices are connected to exactly $q+1$ neighbors. 
However, to lighten a bit notations and proofs, the parameters $L$ and $\alpha$, as well as the potential $U$, will not depend on $N$.

\subsubsection*{Eigenfunctions on the edges}
Fix an edge $e\in E_N$ and consider the eigenproblem
\begin{equation}\label{e:eigenproblem}
-\psi_e'' + U\psi_e = \lambda \psi_e
\end{equation}
for $\lambda\in \C$ and $\psi_e\in W^{2,2}(0,L)$. On this edge $e\equiv [0,L]$, choose a basis of solutions $C_{\lambda}(x)$, $S_{\lambda}(x)$ of \eqref{e:eigenproblem} satisfying
\[
\begin{pmatrix} C_{\lambda}(0) & S_{\lambda}(0) \\ C'_{\lambda}(0) & S'_{\lambda}(0) \end{pmatrix} = \begin{pmatrix} 1 & 0 \\ 0 & 1 \end{pmatrix} .
\]
Note that if $U\equiv 0$, we may take $C_{\lambda}(x) = \cos\sqrt{\lambda} x$ and $S_{\lambda}(x) = \frac{\sin\sqrt{\lambda}x}{\sqrt{\lambda}}$.

Denote $c(\lambda) = C_{\lambda}(L)$, $s(\lambda) = S_{\lambda}(L)$, $c'(\lambda) = C'_{\lambda}(L)$ and $s'(\lambda)=S'_{\lambda}(L)$.
Any solution $\psi_e$ of \eqref{e:eigenproblem} satisfying $\psi_e(0,\lambda) = c_1$ and $\psi'_e(0,\lambda)=c_2$ will have at $L$ the values
\begin{equation}\label{e:eiganmat}
\begin{pmatrix} \psi_e(L,\lambda)\\\psi'_e(L,\lambda)\end{pmatrix} = M_{\lambda}\begin{pmatrix} c_1\\c_2\end{pmatrix} \qquad \text{where } M_{\lambda} = \begin{pmatrix} c(\lambda) & s(\lambda) \\ c'(\lambda) & s'(\lambda) \end{pmatrix} .
\end{equation}

By the Wronskian identity, we have 
\begin{equation}\label{eq:Wronskian}
c(\lambda)s'(\lambda)- s(\lambda) c'(\lambda)=1.
\end{equation}

Since $U$ is symmetric, we have the following identity (cf. \cite[Lemma 3.1]{Car97}):
\begin{equation}\label{eq:FromCtoS}
c(\lambda)=s'(\lambda)
\end{equation}
Indeed, $x\mapsto C_{\lambda}(L-x)$ and $x\mapsto s'(\lambda)C_{\lambda}(x) - c'(\lambda)S_{\lambda}(x)$ are both solutions of (\ref{e:eigenproblem}), with the same value and derivative at $x=L$, so that they must be equal. Evaluating these functions at $x=0$ gives (\ref{eq:FromCtoS}). Similarly, we have
\begin{equation}\label{e:simplif}
s(\lambda)C_{\lambda}(x)-c(\lambda)S_{\lambda}(x) = S_{\lambda}(L-x).
\end{equation}

\subsection{Spectral theory on the infinite tree}
Let $H_{\mathbf{T}_q}$ be the operator \eqref{e:H} on the equilateral $(q+1)$-regular quantum tree $\mathbf{G}=\mathbf{T}_q$, with the Kirchhoff boundary conditions.

It is known \cite{Car97} that
\begin{equation}\label{e:spech}
\sigma(H_{\mathbf{T}_q}) = \sigma_1 \cup \sigma_2 \,,
\end{equation}
where
\begin{equation}\label{e:spechh}
\sigma_1 = \left\{\lambda\in \R : \left| (q+1)c(\lambda) + \alpha s(\lambda)\right| \le 2\sqrt{q} \right\} ,
\end{equation}
\[
\sigma_2 = \left\{\lambda\in \R : s(\lambda) = 0 \right\} .
\]
Moreover, assuming $q\ge 2$,
\[
\sigma_{\mathrm{ac}}(H_{\mathbf{T}_q}) = \sigma_1\,, \quad \sigma_{\mathrm{pp}}(H_{\mathbf{T}_q}) = \sigma_2 \quad \text{and} \quad \sigma_1\cap \sigma_2 = \emptyset \,.
\]
In general, $\sigma_2$ is a discrete set, $\sigma_2 \subset \sigma(D)$, where $D$ is the operator $D\psi = -\psi''+U\psi$ with Dirichlet boundary conditions on $[0,L]$ and points of $\sigma_2$ are eigenvalues of $H_{\mathbf{T}_q}$ with infinite multiplicity. Moreover, $\sigma(H_{\mathbf{T}_q})$ has a band structure with infinitely many gaps \cite{Pan06}. In the special case $\alpha=0$ and $U\equiv0$, we denote $-\Delta_{\mathbf{T}_q}=H_{\mathbf{T}_q}$, and have
\[
\sigma_{\mathrm{ac}}(-\Delta_{\mathbf{T}_q}) = \bigcup_{n=1}^{\infty} \left[ \left(\frac{(n-1)\pi + \theta}{L}\right)^2, \left(\frac{n\pi -\theta}{L}\right)^2\right] \quad \text{and} \quad \sigma_{\mathrm{pp}}(-\Delta_{\mathbf{T}_q}) = \left\{\left(\frac{n\pi}{L}\right)^2\right\}_{n=1}^{\infty} \,,
\]
where $\theta = \arccos \frac{2\sqrt{q}}{q+1}\in(0,\frac{\pi}2)$.

We say that $I\subset \sigma_{\mathrm{ac}}(H_{\mathbf{T}_q})$ is \emph{in a fixed band} if the map $I\ni \lambda \mapsto (q+1)c(\lambda) + \alpha s(\lambda)$ is injective.

\subsection{Presentation of our results}\label{sec:Presentation}
Consider a sequence of $(q+1)$-regular combinatorial graphs $G_N= (V_N,E_N)$, with $|V_N|=N$, satisfying the following assumptions:

\medskip

\textbf{(EXP)} The sequence $(G_N)$ is a family of expanders. That is, there exists $\beta>0$ such that the spectrum $\sigma(\frac{\cA_{G_N}}{q+1})\subset [-1+\beta,1-\beta]\cup\{1\}$ for all $N$, where $\cA_{G_N}$ is the adjacency matrix
of the graph $G_N$.

\medskip

\textbf{(BST)} For all $r>0$,
\[
\lim_{N\to \infty} \frac{\#\{x\in V_N : \rho_{G_N}(x)<r\}}{N} = 0
\]
where $\rho_{G_N}(x)$ is the largest $\rho$ such that the sub-graph contained
in a ball of radius $\rho$ centered at $x$ has no closed cycles.

\medskip

These assumptions were also needed to prove quantum ergodicity for discrete graphs in \cite{A,AS2}. They are known to be ``generic'', in the sense that a regular graph picked at random will typically satisfy these assumptions. There are also explicit examples of $(G_N)$ satisfying both assumptions \cite{LPS88,BoGa08}. Assumption \textbf{(BST)}  means that the graphs $(G_N)$ have few short cycles, equivalently, they converge to the combinatorial $(q+1)$-regular tree $\T_q$ in the sense of Benjamini-Schramm \cite{BS}.

We may now present our main result,  which essentially says that in the regions where $H_{\mathbf{T}_q}$ is spectrally delocalized, it also has a form of spatial delocalization. Namely, if we consider a sequence of quantum graphs converging to $\mathbf{T}_q$, then the eigenfunctions in any bounded interval within the AC spectrum of $H_{\mathbf{T}_q}$ are quantum ergodic when the graph is large enough. To clarify this result, we present it in the form of three theorems of growing generality. Theorem~\ref{cor:equireg} discusses the special case where the observables are constant on the edges. General bounded observables are considered in Theorem~\ref{thm:equireg}. Finally, Theorem~\ref{thm:integral} studies the case where the observables are replaced by integral kernels. 

\begin{thm}\label{cor:equireg}
Assume $(G_N)$ satisfies \emph{\textbf{(EXP)}} and \emph{\textbf{(BST)}}.

Let $(\psi_n^{(N)})_{n\in\N}$ be an orthonormal basis of eigenfunctions of $H_{\mathbf{G}_N}$ and let $(\lambda^{(N)}_n)$ be the corresponding sequence of eigenvalues.  Let $I$ be an open interval such that $\overline{I}\subset \sigma_{\mathrm{ac}}(H_{\mathbf{T}_q})$ lies in a fixed band of AC spectrum, and denote by $N(I)$ the count of $\lambda^{(N)}_n\in I$.

Let $a : E_N\rightarrow [-1,1]$. Then
\begin{equation}\label{eq:QE}
\lim_{N\to\infty} \frac{1}{N(I)} \sum_{\lambda_n^{(N)}\in I} \left|\langle \psi_n^{(N)}, a\psi_n^{(N)}\rangle -\langle a\rangle \right|^2 =0 \,,
\end{equation}
where $\langle \psi^{(N)}_n, a\psi^{(N)}_n\rangle = \sum_{e\in E_N} a(e) \int_0^L |\psi^{(N)}_n(x_e)|^2\,\dd x_e$ and
\begin{displaymath}
\langle a\rangle = \frac{1}{|E_N|}\sum_{e\in E_N} a(e).
\end{displaymath}
\end{thm}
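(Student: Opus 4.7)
The plan is to reduce the claim to a quantum ergodicity statement for the non-backtracking operator $\cB$ on the discrete graphs $G_N$. Since $I$ is contained in a fixed band of $\sigma_{\mathrm{ac}}(H_{\mathbf{T}_q})$, the map $\lambda\mapsto (q+1)c(\lambda)+\alpha s(\lambda)$ is a smooth diffeomorphism of $I$ into $(-2\sqrt{q},2\sqrt{q})$, so we can write $(q+1)c(\lambda)+\alpha s(\lambda)=2\sqrt{q}\cos\theta(\lambda)$ for a smooth $\theta:I\to(0,\pi)$. As announced in Section~\ref{sec:relations}, each eigenfunction $\psi_n^{(N)}$ with $\lambda_n^{(N)}\in I$ is then encoded by a function $f_n^{(N)}$ on the directed bonds $B_N$ of $G_N$, satisfying $\cB f_n^{(N)}=\sqrt{q}\,\eul^{\ii\theta(\lambda_n^{(N)})}f_n^{(N)}$; the Kirchhoff conditions are precisely what closes this non-backtracking eigenvalue equation at each vertex, and the correspondence is bijective once both orientations of $\theta$ are accounted for.

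Next I would express the matrix element $\langle \psi_n^{(N)},a\psi_n^{(N)}\rangle$ as a Hermitian quadratic form in $f_n^{(N)}$. On each edge $e$, identity \eqref{e:simplif} combined with the basis $\{S_\lambda(x),S_\lambda(L-x)\}$ of solutions of \eqref{e:eigenproblem} gives
\begin{equation*}
\psi_n^{(N)}(x_e)=\frac{1}{s(\lambda_n^{(N)})}\Bigl[\psi_n^{(N)}(t_e)\,S_{\lambda_n^{(N)}}(x_e)+\psi_n^{(N)}(o_e)\,S_{\lambda_n^{(N)}}(L-x_e)\Bigr],
\end{equation*}
and the two vertex values are themselves explicit linear combinations of the bond coefficients $f_n^{(N)}(b_e^\pm)$ associated to the two orientations of $e$. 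Squaring and integrating yields, for every $e$,
\begin{equation*}
\int_0^L |\psi_n^{(N)}(x_e)|^2\,\dd x_e=A(\lambda_n^{(N)})\bigl(|f_n^{(N)}(b_e^+)|^2+|f_n^{(N)}(b_e^-)|^2\bigr)+\Re\bigl[B(\lambda_n^{(N)})\,\overline{f_n^{(N)}(b_e^+)}\,f_n^{(N)}(b_e^-)\bigr]
\end{equation*}
for explicit smooth $A:I\to\R$ and $B:I\to\C$. Multiplying by $a(e)$ and summing over $e$, the matrix element becomes $\langle f_n^{(N)},K_a(\lambda_n^{(N)})f_n^{(N)}\rangle_{\ell^2(B_N)}$, where $K_a(\lambda)$ is a bounded Hermitian operator supported on pairs of opposite bonds and depending smoothly on $\lambda\in I$.

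I would then apply the discrete quantum ergodicity theorem for $\cB$ on $(q+1)$-regular expanders converging to $\T_q$ in the Benjamini-Schramm sense. Via the Ihara-Bass identity, \textbf{(EXP)} yields a uniform spectral gap of $\cB$ away from the trivial eigenvalues on the circle $|z|=\sqrt{q}$, while \textbf{(BST)} provides the local tree-likeness needed to compute the limits of diagonal averages. In the spirit of \cite{A,ALM,AS2,BrWi16}, one obtains
\begin{equation*}
\frac{1}{N(I)}\sum_{\lambda_n^{(N)}\in I}\bigl|\langle f_n^{(N)},K_a(\lambda_n^{(N)})f_n^{(N)}\rangle-\overline{K_a}(\lambda_n^{(N)})\bigr|^2\longrightarrow 0,
\end{equation*}
where $\overline{K_a}(\lambda)$ is the tracial average of $K_a(\lambda)$ on the $\sqrt{q}\,\eul^{\ii\theta(\lambda)}$-eigenspace of $\cB$. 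Edgewise locality of $K_a(\lambda)$, combined with the asymptotic equidistribution of the bond coefficients coming from \textbf{(BST)}, forces this tracial mean to factorise as $\overline{K_a}(\lambda)=\langle a\rangle\,\gamma(\lambda)$ for a bond-independent $\gamma(\lambda)$; specialising to $a\equiv 1$ and using the physical normalisation $\|\psi_n^{(N)}\|_{\mathcal{H}}=1$ identifies $\gamma(\lambda_n^{(N)})$ with $1+o(1)$, which yields \eqref{eq:QE}.

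The main obstacle is step three: proving the discrete quantum ergodicity statement for $\cB$ with observables that are \emph{not} diagonal in the bond basis, since $K_a(\lambda)$ couples each bond to its reverse. Unlike the equi-transmitting setup of \cite{BrWi16}, one must handle an extra off-diagonal term, and this requires averaging along short non-backtracking paths, controlled by combining the spectral gap of $\cB$ from \textbf{(EXP)} with the absence of short cycles from \textbf{(BST)}, as in the strategy of \cite{AS2}. A secondary technicality is the joint $\lambda$-dependence of $K_a(\lambda)$ and of the relevant $\cB$-eigenvalue $\sqrt{q}\,\eul^{\ii\theta(\lambda)}$, which is handled by the uniform smoothness of $A,B,\theta$ on the fixed band $I$.
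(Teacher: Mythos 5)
Your reduction is sound as far as it goes, and in fact your bond functions $f_n^{(N)}$ are exactly the paper's $f_j=\tau_+\mathring{\psi}_j-\mu_j^-\tau_-\mathring{\psi}_j$ of \eqref{e:fjfjtoile}, which satisfy $\cB f_j=(\mu_j^-)^{-1}f_j$ with $|(\mu_j^-)^{-1}|=\sqrt{q}$ (Lemma~\ref{lem:nonback}); likewise your quadratic form is the form \eqref{eq:decompoSP} rewritten after the invertible change of variables $(\psi_n(o_e),\psi_n(t_e))\mapsto(f_n(b_e^+),f_n(b_e^-))$, and the trick of identifying the normalisation $\gamma(\lambda)$ by testing against $a\equiv 1$ legitimately sidesteps the explicit computation of $\kappa_\lambda$ and of $\int_0^L\Psi_\lambda=\frac{2}{q+1}$ that the paper carries out via Green's function identities. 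The genuine gap is your step three. The ``discrete quantum ergodicity theorem for $\cB$'' that you invoke --- for Hermitian, $\lambda$-dependent observables supported on bond-reversal pairs $\{b,\hat b\}$ --- is not an existing result you can cite: \cite{A,ALM,AS2} prove QE for the adjacency matrix with vertex or path observables, and \cite{BrWi16} treats equi-transmitting (non-Kirchhoff) conditions where the relevant classical operator really is $\frac1q\cB$. The off-diagonal coupling $\overline{f_n(b)}f_n(\hat b)$ is precisely the awkward term (edge reversal intertwines $\cB$ with $\cB^*$, so such observables are not natural for a $\cB$-eigenfunction statement), and your one-sentence plan to handle it by ``averaging along short non-backtracking paths\dots as in \cite{AS2}'' restates the strategy without carrying it out. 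That missing argument is the entire analytic content of the theorem; it occupies Sections~\ref{sec:discnonba}--\ref{sec:vartonorm} of the paper (reduction of $\vari$ to $\varnbi$, the invariance via $\cR^\lambda_{n,r}$, and the Hilbert--Schmidt bound of Lemma~\ref{lem:HSgen} using spherical functions to kill the off-diagonal terms). A secondary inaccuracy: \textbf{(EXP)} does not place all nontrivial $\cB$-eigenvalues on $|z|=\sqrt q$ (adjacency eigenvalues $\mu$ with $2\sqrt q<|\mu|\le(1-\beta)(q+1)$ give real $\cB$-eigenvalues off that circle); what is actually used from \textbf{(EXP)} is the contraction $\|\widetilde{\cS}_T J\|\le \frac{c}{T}\|J\|$.

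You should also note that for this particular statement your route is heavier than necessary. Because $a$ is constant on each edge, the $\lambda$-dependence of your coefficients $A(\lambda),B(\lambda)$ factors out as bounded scalars $a_{n,L},b_{n,L}$ (Lemma~\ref{lem:reduction}(iii)), so the whole problem reduces to the variance of two \emph{energy-independent} observables $K_f\in\mathscr{H}_0$ and $M_f\in\mathscr{H}_1$ tested against the adjacency eigenfunctions $\mathring{\psi}_n$ of $\cA_{G_N}$ (which satisfy $\cA_{G_N}\mathring{\psi}_n=w(\lambda_n)\mathring{\psi}_n$ and are orthogonal for distinct $\lambda_n\in I$ since $w$ is injective there). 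One then bounds the variance by a normalised Hilbert--Schmidt norm (Lemma~\ref{lem:HSfacile}) and concludes by the known QE argument for regular combinatorial graphs \cite{A,ALM}; no QE statement for $\cB$ itself, and no Green's functions, are needed. The non-backtracking machinery you set up is what the paper reserves for the general Theorem~\ref{thm:equireg}, where the observables genuinely depend on the energy.
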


We postpone discussing the consequences of this theorem until \S~\ref{sec:consequences}. In order to describe the limiting distribution of the $|\psi_n^{(N)}(t)|^2\,\dd t$ inside the edges, we now replace the locally constant observables $a(x_e) = a(e)$ by general observables $f(x_e)$ as follows.

\begin{thm}\label{thm:equireg}
Assume $(G_N)$ satisfy \emph{\textbf{(EXP)}} and \emph{\textbf{(BST)}}.

Let $(\psi_n^{(N)})_{n\in\N}$ be an orthonormal basis of eigenfunctions of $H_{\mathbf{G}_N}$ and let $I$ be an open interval such that $\overline{I}\subset \sigma_{\mathrm{ac}}(H_{\mathbf{T}_q})$ lies in a fixed band of AC spectrum. Then for any function $f=(f_e)\in \mathop\bigoplus_{e\in E_N}L^2[0,L]$ satisfying $|f(x)| \le 1$, we have
\begin{equation}\label{e:main'}
\lim_{N\to\infty} \frac{1}{N(I)} \sum_{\lambda_n^{(N)}\in I} \left|\langle \psi_n^{(N)}, f\psi_n^{(N)}\rangle -\langle f\rangle_{\lambda_n^{(N)}} \right|^2 =0 \,,
\end{equation}
where $\langle \psi_n, f\psi_n\rangle = \sum_{e\in E_N} \int_0^L f_e(x_e) |\psi_n(x_e)|^2\,\dd x_e$,
\begin{equation}\label{e:weightedav}
\langle f \rangle_{\lambda_n} = \frac{1}{N} \sum_{e\in E_N} \int_0^L f_e(x_e) \Psi_{\lambda_n}(x_e) \,\dd x_e \,,
\end{equation}
\[
\Psi_{\lambda}(x_e) = \frac{1}{\kappa_{\lambda}} \cdot \frac{\Im G_{\mathbf{T}_q}^{\lambda+\ii 0}(\tilde{x}_e,\tilde{x}_e)}{\Im G^{\lambda+\ii 0}_{\mathbf{T}_q}(o,o)}\,,
\]
and if $w(\lambda) := (q+1)c(\lambda) + \alpha s(\lambda)$, we have
\begin{equation}\label{e:kappalambda}
\kappa_{\lambda}= \frac{q+1}{s^2(\lambda)}\int_0^L S^2_{\lambda}(t)\,\dd t + \frac{w(\lambda)}{s^2(\lambda)}\int_0^LS_{\lambda}(L-t)S_{\lambda}(t)\,\dd t\, .
\end{equation}
Here, $\tilde{x}_e$ is a lift of $x_e$ to $\mathbf{T}_q$ (the universal cover of $\mathbf{G}_N$).
\end{thm}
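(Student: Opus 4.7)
The plan is to reduce Theorem~\ref{thm:equireg} to a discrete quantum ergodicity statement for the non-backtracking operator $\cB$ on the combinatorial graphs $G_N$, for which the hypotheses \textbf{(EXP)} and \textbf{(BST)} are tailored. The bridge is provided by Section~\ref{sec:relations}: to each eigenfunction $\psi_n^{(N)}$ of $H_{\mathbf{G}_N}$ with eigenvalue $\lambda\in I$ one associates a function $\Phi_n^{(N)}$ on the directed bonds of $G_N$ which is an eigenfunction of $\cB$, with spectral parameter determined by $\lambda$ via the dispersion relation encoded in \eqref{e:spechh}. Conversely, on each edge $e\simeq [0,L]$ the restriction $\psi_n^{(N)}|_e$ is reconstructed from the values of $\Phi_n^{(N)}$ at the two oriented copies of $e$ via the fundamental solutions $C_\lambda, S_\lambda$ of \eqref{e:eigenproblem}.

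The first step is to turn the continuous quadratic form into a discrete one. Writing $\psi_n^{(N)}|_e(x)=\alpha_e C_{\lambda_n}(x)+\beta_e S_{\lambda_n}(x)$, the pair $(\alpha_e,\beta_e)$ is an explicit affine function of $\Phi_n^{(N)}$ fixed by continuity at $o_e$ together with the transfer relation \eqref{e:eiganmat} at $t_e$. Expanding $|\psi_n^{(N)}|_e|^2$ and integrating against $f_e$ produces a Hermitian kernel $K_{f,\lambda_n}$ on directed bonds whose entries depend only on $\int_0^L f_e C_\lambda^2$, $\int_0^L f_e S_\lambda^2$, $\int_0^L f_e C_\lambda S_\lambda$ and $\int_0^L f_e S_\lambda(L-\cdot)S_\lambda$ (the last two being related through \eqref{e:simplif}), so that $\langle \psi_n^{(N)}, f\psi_n^{(N)}\rangle = \langle \Phi_n^{(N)}, K_{f,\lambda_n}\Phi_n^{(N)}\rangle$. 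One then invokes the discrete quantum ergodicity theorem for non-backtracking operators on expanders from \cite{AS2,ALM,A,BLL}: the fixed-band assumption ensures that the spectral parameter of $\cB(\lambda)$ stays uniformly on the unit circle away from degenerate values as $\lambda$ ranges over $I$, so that, together with \textbf{(EXP)}+\textbf{(BST)}, we obtain
\begin{equation*}
\frac{1}{N(I)} \sum_{\lambda_n^{(N)}\in I} \Bigl| \langle \Phi_n^{(N)}, K_{f,\lambda_n}\Phi_n^{(N)}\rangle - \overline{K_{f,\lambda_n}} \Bigr|^2 \longrightarrow 0,
\end{equation*}
where $\overline{K_{f,\lambda}}$ is a Benjamini--Schramm average of $K_{f,\lambda}$ computed on the universal cover $\mathbf{T}_q$.

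The main obstacle, where I expect most of the effort to go, is the identification of $\overline{K_{f,\lambda_n}}$ with the weighted average $\langle f\rangle_{\lambda_n}$ of \eqref{e:weightedav}. This requires an explicit computation of $\Im G_{\mathbf{T}_q}^{\lambda+\ii 0}(\tilde{x}_e,\tilde{x}_e)$ at an interior point of an edge: one cuts the tree at $\tilde x_e$, solves the standard self-consistent quadratic equation for the Weyl--Titchmarsh function of each of the two semi-infinite $(q+1)$-regular branches (this is where the combination $w(\lambda)=(q+1)c(\lambda)+\alpha s(\lambda)$ enters, matching \eqref{e:spechh} and \eqref{e:kappalambda}), and patches the resulting half-line Green's functions through the transfer matrix $M_\lambda$ of \eqref{e:eiganmat}, using \eqref{eq:Wronskian} and \eqref{eq:FromCtoS} to simplify. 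The integrands $S_\lambda^2(t)$ and $S_\lambda(L-t)S_\lambda(t)$ appearing in \eqref{e:kappalambda} should emerge naturally from this calculation, and the normalization $\kappa_\lambda$ is precisely what is needed so that $\frac{1}{N}\sum_{e\in E_N}\int_0^L \Psi_\lambda(x_e)\,\dd x_e = 1$, in agreement with the Benjamini--Schramm limit of the trace of $K_{\one,\lambda}$. The assumption that $\overline{I}\subset \sigma_{\mathrm{ac}}(H_{\mathbf{T}_q})$ lies inside a fixed band keeps $\Im G_{\mathbf{T}_q}^{\lambda+\ii 0}(o,o)$ uniformly positive and bounded on $I$, so all denominators and the passage to the limit are uniformly controlled, and the tail contributions coming from vertices $x$ with small $\rho_{G_N}(x)$ are absorbed by \textbf{(BST)}.
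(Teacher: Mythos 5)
Your overall strategy is the same as the paper's: express $\langle \psi_n, f\psi_n\rangle$ as a discrete quadratic form in the boundary data of $\psi_n$ (the paper uses the vertex values $\mathring{\psi}_n$, which satisfy $\cA_{G_N}\mathring{\psi}_n = w(\lambda_n)\mathring{\psi}_n$, and the non-backtracking functions $f_n = \tau_+\mathring{\psi}_n - \mu_n^-\tau_-\mathring{\psi}_n$ with $\mu_n^-\cB f_n = f_n$), then control a discrete variance using \textbf{(EXP)} and \textbf{(BST)}, and finally identify the limit density via Carlson's explicit Green's function on $\mathbf{T}_q$. Your reduction of the quadratic form to kernels built from $\int f_e S_\lambda^2$, $\int f_e S_\lambda(L-\cdot)S_\lambda$, etc.\ matches Lemma~\ref{lem:reduction}, and your description of the identification of the limit matches Appendix~\ref{app:a2} (including the correct normalization $\int_0^L\Psi_\lambda = \tfrac{2}{q+1}$).

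However, there is a genuine gap at the step where you ``invoke the discrete quantum ergodicity theorem for non-backtracking operators on expanders from \cite{AS2,ALM,A,BLL}.'' The discrete observable $K_{f,\lambda_n}$ produced by your reduction depends on $n$ through $\lambda_n$: each eigenfunction is tested against its \emph{own} observable, whereas the cited theorems bound a variance $\frac{1}{N(I)}\sum_n |\langle \phi_n, K\phi_n\rangle - \overline{K}|^2$ for a \emph{single} observable $K$. The paper states explicitly that the results of \cite{A,AS2} cannot be applied directly here, and the bulk of its proof (Lemma~\ref{lem:HSgen}) is devoted precisely to this point: one completes $(\mathring{\psi}_n)$ to an eigenbasis of $\cA_{G_N}$, extends $\lambda\mapsto K^\lambda$ to a function of the discrete eigenvalue $m=w(\lambda)$, approximates the resulting continuous functions of $\cA_{G_N}$ by polynomials, uses \textbf{(BST)} to pass to $\cA_{\T_q}$, and then exploits the spectral decomposition $F(\cA_{\T_q})(v,w)=\frac{1}{\pi}\int F(m)\Im G^m(v,w)\,\dd m$ together with the spherical-function recursion \eqref{eq:RecursSpherical} to kill the off-diagonal terms and arrive at a bound by $\int_I \|K^\lambda\|_{\mathscr{H}}^2\,\dd\lambda$ (this is also where the change of variables $m=w(\lambda)$ and the fixed-band hypothesis are really used). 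Separately, you do not explain how the eigenvalue equation $\mu_n^-\cB f_n = f_n$ is exploited: in the paper this invariance is used to replace the observable by its average over translates along non-backtracking paths of length $n\to\infty$ (the operators $\cR_{n,r}^\lambda$), and it is the Hilbert--Schmidt norm of these averaged operators that is shown to vanish using \textbf{(EXP)}. You also locate the main difficulty in the identification of the Benjamini--Schramm average with $\langle f\rangle_{\lambda_n}$; in fact that is a self-contained (if lengthy) computation relegated to the appendix, while the energy-dependence of the observables is where the real work lies.
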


In the previous theorem, $G_{\mathbf{T}_q}^{\gamma}(x,y) = (H_{\mathbf{T}_q}-\gamma)^{-1}(x,y)$ is the Green's function of $H_{\mathbf{T}_q}$, $o$ is an arbitrary vertex, and we denoted $G^{\lambda+\ii 0}(x,y) = \lim_{\eta\downarrow 0} G^{\lambda+\ii\eta}(x,y)$. Note that this Green's function is completely explicit, see \eqref{eq:greensfunction} and Appendix~\ref{app:a}.

Theorem \ref{cor:equireg} follows directly from Theorem \ref{thm:equireg} and identity \eqref{eq:3}, which says that $\int_0^L\Psi_{\lambda_n}(t)\,\dd t = \frac{2}{q+1}$, and thus $\frac{\int_0^L\Psi_{\lambda_n}(t)\,\dd t}{N} = \frac{1}{|E_N|}$.

Note that the observable $f$ depends on $N$, though we do not indicate this explicitly in the notation. In fact, one chooses an observable $f=f_N$ for each graph $\mathbf{G}_N$.

\medskip

\subsubsection*{Quantum ergodicity for integral operators}

We now present a more general quantum ergodicity result whose aim is to study the eigenfunction correlator $\overline{\psi_n^{(N)}(x)}\psi_n^{(N)}(y)\,\dd x\,\dd y$. This also allows us to gain a better understanding of the limiting density $\Psi_{\lambda}$.

In general, an integral operator on $\mathop\bigoplus_e L^2[0,L_e]$ takes the form
\[
(K\psi)(x_e) = \sum_{e'\in E_N} \int_0^{L_{e'}} K_{e,e'}(x_e,y_{e'})\psi_{e'}(y_{e'})\,\dd y_{e'} \,.
\]
In the following we consider integral operators whose kernels vanish if $d(e,e')$ is large. For convenience, we consider directed edges and consider operators of the form
\begin{equation}
(K_k\psi)(x_b) = \sum_{(b_1,\dots,b_k)_b} \int_0^{L} K_{b_1,b_k}(x_{b_1},y_{b_k})\psi_{b_k}(y_{b_k})\,\dd y_{b_k}\,,
\end{equation}
where $b_1 := b$ and the sum runs over non-backtracking paths of length exactly $k$ whose first vertices are $(o_b,t_b)$. Given such $K_k$, we have
\begin{equation}\label{e:uneq}
2\left\langle \psi_n^{(N)}, K_k \psi_n^{(N)}\right\rangle = \sum_{(b_1;b_k)\in B_k} \int_0^L\int_0^L K_{b_1,b_k}(x_{b_1},y_{b_k})\overline{\psi_n^{(N)}(x_{b_1})}\psi_n^{(N)}(y_{b_k})\,\dd x_{b_1} \dd y_{b_k}\,,
\end{equation}
where the sum now runs over all $k$-paths $(b_1;b_k)\equiv(x_0;x_k)$ in $G_N$.
The factor $2$ in the left-hand side of \eqref{e:uneq} arises because we
sum over each bond twice. We shall prove that

\begin{thm}\label{thm:integral}
Under the assumptions of Theorem~\ref{thm:equireg}, if the integral kernels satisfy $|K_{b_1,b_k}(x_{b_1},y_{b_k})|\le 1$, then
\[
\lim_{N\to \infty} \frac{1}{N(I)} \sum_{\lambda_n^{(N)}\in I} \left| \langle \psi_n^{(N)}, K_k \psi_n^{(N)}\rangle - \langle K_k\rangle_{\lambda_n^{(N)}}\right|^2 = 0\,,
\]
where
\[
\langle K_k\rangle_{\lambda_n^{(N)}} = \frac{1}{N}\sum_{(b_1;b_k)\in B_k} \int_0^L \int_0^L K_{b_1,b_k}(x_{b_1},y_{b_k})\Psi_{\lambda_n^{(N)},k}(x_{b_1},y_{b_k})\,\dd x_{b_1}\,\dd y_{b_k} \,,
\]
and for $x_{b_1}\in b_1$, $y_{b_k}\in b_k$, we have
\begin{equation}\label{e:psienfin}
\Psi_{\lambda,k}(x_{b_1},y_{b_k}) = \frac{1}{2\kappa_{\lambda}}\cdot 
\frac{\Im G^{\lambda+\ii 0}_{\mathbf{T}_q}(\tilde{x}_{b_1},\tilde{y}_{b_k})}{\Im G_{\mathbf{T}_q}^{\lambda
+\ii 0}(o,o)} \,.
\end{equation}
Here, $\tilde{x}_{b_1}$ and $\tilde{y}_{b_k}$ are lifts of $x_{b_1}$ and $y_{b_k}$ to $\mathbf{T}_q$ such that $d(\tilde{x}_{b_1},\tilde{y}_{b_k})= d(x_{b_1},y_{b_k})$.
\end{thm}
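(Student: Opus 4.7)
The plan is to reduce the integral-kernel quadratic form on each quantum graph $\mathbf{G}_N$ to a discrete quadratic form on the directed-edge space $B_N$, apply a non-backtracking version of discrete quantum ergodicity, and finally identify the limiting mean using the explicit resolvent of $H_{\mathbf{T}_q}$. The strategy extends the one used for Theorem~\ref{thm:equireg} (which corresponds morally to $k=1$) and closely parallels the combinatorial graph case treated in \cite{A, AS2}.

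The first step is to exploit the bond-amplitude representation developed in Section~\ref{sec:relations}: for $\lambda \in I$, any eigenfunction $\psi_n$ at eigenvalue $\lambda_n$ is determined on each directed bond $b \in B_N$ by two boundary amplitudes, assembled into a discrete function $A_n : B_N \to \C$ which is an eigenfunction of the non-backtracking operator $\cB(\lambda_n)$. On each edge $b$, $\psi_n$ takes the explicit form $\psi_n(x_b) = A_n(b)\,\varphi^+_{\lambda_n}(x) + A_n(\hat b)\,\varphi^-_{\lambda_n}(x)$, where $\hat b$ is the reverse bond and $\varphi^\pm_\lambda$ are fixed linear combinations of $C_\lambda$ and $S_\lambda$. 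Substituting this into the right-hand side of \eqref{e:uneq} and performing the two one-dimensional integrations against $K_{b_1,b_k}(x_{b_1}, y_{b_k})$ converts $\langle \psi_n, K_k \psi_n\rangle$ into a discrete quadratic form
\begin{equation*}
\tfrac{1}{2}\sum_{(b_1;b_k)\in B_k} \sum_{\epsilon,\epsilon' \in \{+,-\}} \widetilde K^{\epsilon,\epsilon'}_{b_1,b_k}(\lambda_n)\, \overline{A_n(b_1^\epsilon)}\, A_n(b_k^{\epsilon'}),
\end{equation*}
whose coefficients $\widetilde K^{\epsilon,\epsilon'}_{b_1,b_k}(\lambda)$ are $L^2$ pairings of $K_{b_1,b_k}$ with $\varphi^\epsilon_\lambda \otimes \varphi^{\epsilon'}_\lambda$, and which is supported on non-backtracking $k$-paths in $B_N$.

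The heart of the argument is a discrete quantum ergodicity statement for the non-backtracking eigenfunctions $A_n$: under \textbf{(EXP)} and \textbf{(BST)}, for any family of kernels $\widetilde K$ supported on $k$-paths with uniformly bounded entries,
\begin{equation*}
\frac{1}{N(I)}\sum_{\lambda_n \in I} \left|\langle A_n, \widetilde K(\lambda_n) A_n \rangle_{B_N} - \mathrm{Av}_{\lambda_n}(\widetilde K)\right|^2 \xrightarrow[N\to\infty]{} 0,
\end{equation*}
where $\mathrm{Av}_\lambda(\widetilde K)$ is computed by integrating $\widetilde K$ against the $k$-th iterate of the appropriate transfer kernel on $\T_q$. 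This is where both hypotheses are used: \textbf{(EXP)} gives a uniform spectral gap for $\cB(\lambda)$ (via its relation to the adjacency matrix and the discrete non-backtracking operator), controlling the off-diagonal terms; \textbf{(BST)} lets one replace local sums over $G_N$ by integrals on the regular tree $\T_q$, providing the limit. Identifying this limit as the claimed expression $\Psi_{\lambda, k}$ is an unfolding computation: combining the explicit powers of the transfer operator on the tree with the one-dimensional pairings against $\varphi^\pm_\lambda$ and using identities \eqref{eq:Wronskian}--\eqref{e:simplif} together with the spherical symmetry of $\T_q$, one recognizes the result as the $L^2$ pairing of $K_{b_1,b_k}$ against $\Im G^{\lambda+\ii 0}_{\mathbf{T}_q}(\tilde x_{b_1}, \tilde y_{b_k})$. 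The normalization $\Im G^{\lambda+\ii 0}_{\mathbf{T}_q}(o,o)$ in the denominator together with the edge mass $\kappa_\lambda$ from \eqref{e:kappalambda} arises exactly to convert the transfer-operator weight into the probabilistic density \eqref{e:psienfin}, and the factor $\tfrac{1}{2}$ on the left cancels the one in \eqref{e:uneq}.

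The main obstacle is the discrete quantum ergodicity step for $\cB(\lambda)$ applied to $k$-paths: one needs variance bounds with constants independent of $N$ (though possibly dependent on $k$, $I$, $U$, $\alpha$), which requires a quantitative spectral gap for $\cB(\lambda)$ uniform for $\lambda$ in a fixed band of AC spectrum. A secondary but essential difficulty is algebraic: showing that the combination of powers of the transfer kernel and the pairings against $\varphi^\pm_\lambda$ assembles exactly into $\Im G^{\lambda+\ii 0}_{\mathbf{T}_q}$. This relies crucially on the explicit computation of the Green's function on $\mathbf{T}_q$ and the symmetries forced by $U(L-x)=U(x)$, in particular the identity $c(\lambda)=s'(\lambda)$ of \eqref{eq:FromCtoS}.
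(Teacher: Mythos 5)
Your proposal is correct in outline and follows essentially the same architecture as the paper: reduce $\langle\psi_n, K_k\psi_n\rangle$ to a discrete quadratic form supported on non-backtracking $k$-paths, control the resulting discrete variance under \textbf{(EXP)} and \textbf{(BST)} via the non-backtracking machinery of Section~\ref{sec:equiregproof}, and identify the limiting average through the explicit Green's function of $H_{\mathbf{T}_q}$. The one structural difference is the initial decomposition: you expand $\psi_n$ on each bond in forward/backward amplitudes $A_n(b),A_n(\hat b)$, yielding a single family of bond-indexed observables with a $2\times2$ orientation structure, whereas the paper uses the vertex-value representation \eqref{e:psinegal} to produce three vertex-indexed observables $J_{K,n}\in\mathscr{H}_k$, $M_{K,n}\in\mathscr{H}_{k-1}$, $P_{K,n}\in\mathscr{H}_{k-2}$, computes their averages via the spherical function values $\Phi_{w(\lambda)}(k)$, $\Phi_{w(\lambda)}(k-1)$, $\Phi_{w(\lambda)}(k-2)$ (using $(S_m)_G=\tilde{\Phi}_m(\cA_G)$), and only passes to the non-backtracking bond functions $f_j,f_j^{\ast}$ inside the variance-control step; the two decompositions are related by an invertible linear change of variables, so both work. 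Two points you gloss over are where the actual work lies: (i) the discrete observables depend on the energy $\lambda_n$, so the ``discrete quantum ergodicity statement'' you invoke as a black box is precisely the hard part of the paper (Lemma~\ref{lem:HSgen}, with the polynomial approximation of the energy-dependent coefficients and the invariance operators $\cR_{n,r}^{\lambda}$); and (ii) the Hilbert--Schmidt bound on the remainders $\vari(\widetilde{\cS}_T[J-\langle J\rangle S_m])$ must be extended from $m=0,1$ to $m=k,k-1,k-2$, which the paper handles by a crude Cauchy--Schwarz estimate that is acceptable only because $k$ stays fixed. Neither point invalidates your plan, but a complete write-up would have to supply both.
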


\subsection{Consequences}\label{sec:consequences}

Using Markov's inequality, it follows from Theorem~\ref{cor:equireg} that for any $\eps>0$, we have
\[
\lim_{N\to\infty} \frac{1}{N(I)}\#\left\{\lambda_n^{(N)}\in I : \left|\left\langle \psi_n^{(N)},a\psi_n^{(N)}\right\rangle - \langle a\rangle\right|>\eps\right\} =0 \,.
\]
In particular, we obtain that for fixed large $N$, we have 
\begin{equation}\label{eq:ApproxEquid}
\sum_{e\in E_N} a(e) \int_0^L |\psi^{(N)}_n(x_e)|^2\,\dd x_e\approx \frac{1}{|E_N|}\sum_{e\in E_N} a(e)
\end{equation}
 for most $\lambda_n^{(N)}\in I$. Formula \eqref{eq:ApproxEquid} is justified if $a$ is supported on a macroscopic part of $\mathbf{G}_N$, say $|\supp a| = c \cdot |\mathbf{G}_N|$ for some $0<c\le 1$. In this case, the error term, which is related to the presence of cycles, is sufficiently small for typical graphs. However, the control we have on the remainder in the limit (\ref{eq:QE}) is not good enough to justify (\ref{eq:ApproxEquid}) when $a$ has a small support (for instance, when $a$ is supported on a single edge). Similar difficulties arise for combinatorial graphs \cite{ALM,AS4}.

For general observables $f$, we similarly deduce from Theorem~\ref{thm:equireg} that
\begin{equation}\label{e:approxequi}
\int_{\mathbf{G}_N} |\psi_n^{(N)}(x)|^2f(x)\,\dd x \approx \frac{1}{N}\int_{\mathbf{G}_N} f(x)\Psi_{\lambda_n^{(N)}}(x)\,\dd x
\end{equation}
for most $\lambda_n^{(N)}\in I$ and observables with large support. One subtlety one should not forget is that when we say ``for most $\lambda_n^{(N)}$'', the set of indices for which this holds true depends on the observable $f$.

The limit measure turns out to be the uniform measure in the special case 
$U\equiv0$ and $\alpha=0$, as we report in Appendix~\ref{app:a2}. So in that special case we get for \emph{any} observable $f^{(N)}=f$ with large support that $\int_{\mathbf{G}_N} |\psi_n^{(N)}(x)|^2f(x)\,\dd x \approx \frac{1}{|\mathbf{G}_N|}\int_{\mathbf{G}_N} f(x)\,\dd x$ for most $\lambda_n^{(N)}\in I$. In other words, most eigenfunctions equidistribute in the large graph limit (in a weak sense). Such an equidistribution holds because in this special case, for any \emph{fixed} $\lambda$, the density of the spectral measure of the infinite tree $\frac{1}{\pi} \Im G^{\lambda+\ii 0}(x,x)$ is constant as $x$ moves along the tree, so that $\Psi_{\lambda_n^{(N)}}(x)$ is a constant. In general, \eqref{e:approxequi} says that $|\psi_n^{(N)}(x)|^2$ varies along the tree essentially like this density does (at the fixed $\lambda_n^{(N)}$). Note that by \eqref{eq:2}, $x\mapsto \Psi_{\lambda}(x)$ is continuously differentiable and symmetric on each edge~: $\Psi_{\lambda}(L-x)=\Psi_{\lambda}(x)$.

Curiously, the coefficient $\frac{1}{\kappa_{\lambda_n}}$ is the $\ell^2$-norm of the function $\psi_n$ restricted to the vertices (see \eqref{e:normpsidis}).

\medskip

Theorem~\ref{thm:integral} can similarly be interpreted as saying that when $N$ gets large enough, the correlation $\overline{\psi_n^{(N)}(x_{b_1})}\psi_n^{(N)}(y_{b_k})\,\dd x_{b_1}\,\dd y_{b_k}$ approaches $\frac{1}{N}\cdot \frac{1}{\kappa_{\lambda}} \cdot  \frac{\Im G^{\lambda_n}_{\mathbf{T}_q}(\tilde{x}_{b_1},\tilde{y}_{b_k})}{\Im G_{\mathbf{T}_q}^{\lambda_n}(o,o)}\,\dd x_{b_1}\,\dd y_{b_k}$.

Note that Theorem~\ref{thm:integral} formally generalizes Theorem~\ref{thm:equireg}. In particular, the $|\psi_n^{(N)}(t)|^2\,\dd t$ equidistributes if and only if the diagonal $\Psi_{\lambda_n,1}(t,t)=\mathrm{constant}$.

\begin{rem}\label{rem:Dirichlet}
We can also test for quantum ergodicity in several bands of AC spectra simultaneously. In fact, if $I$ is a finite union of open intervals $I_r$, each lying in a band, then the variance \eqref{e:main'} of each $I_r$ vanishes, so the same holds for the sum on $I$.

If we replace $\frac{1}{N(I)}$ by $\frac{1}{N}$ in \eqref{e:main'}, we can deduce moreover that the variance \eqref{e:main'} also vanishes for $I$ between the AC bands, as long as $I\cap \sigma_2=\emptyset$. In this case, no real proof is needed; this simply follows because the number of eigenvalues in such regions becomes negligible as the graph grows large. See Remark~\ref{rem:Weyl} for details.

Finally, one does not expect delocalization for $\lambda\in\sigma_2$. For such $\lambda$, one can use  the Dirichlet condition $s(\lambda)=0$ to construct eigenfunctions of $H_{\mathbf{G}_N}$ which are supported on cycles with an even number of edges, essentially by taking $S_{\lambda}(x)$ on each edge of the cycle. See \cite[Figure 4]{kuc:qgII} for an illustration. Such eigenfunctions vanish identically elsewhere on the graph, quite the opposite of being ``uniformly distributed''. See \cite[Section 5]{kuc:qgII} and \cite{CdV15} for further discussion on eigenfunctions of small support.
\end{rem}

\begin{rem}\label{rem:cycle}
One may wonder if we can hope for a stronger ``quantum unique ergodicity'' result, namely that $\int_{\mathbf{G}_N} |\psi_n^{(N)}(x)|^2f(x)\,\dd x \approx \frac{1}{|\mathbf{G}_N|}\int_{\mathbf{G}_N} f(x)\,\dd x$ for \emph{all} $\lambda_n^{(N)}\in I$ and all observables $f$.

To illustrate the subtlety of this question, let us discuss the very easy case where $q=1$ and $U=\alpha=0$. In other words, $G_N$ is just an $N$-cycle. Under Kirchhoff conditions, the Laplacian on $\mathbf{G}_N$ is the same as the Laplacian on the interval $\mathbf{I}_N=[0,NL]$ with periodic boundary conditions. Note that this case does not enter the framework of our theorem, since these graphs are not expanders.

An orthonormal basis of eigenfunctions of $-\Delta_{\mathbf{I}_N}$ is given by $e_k(x) = \frac{\exp(\frac{2\pi \ii k x}{NL})}{\sqrt{NL}}$, $k\in \Z$. For this basis, quantum ergodicity, and even quantum unique ergodicity, are trivial; in fact $\int |e_k(x)|^2 f(x) \dd x = \frac{1}{NL}\int f(x)\,\dd x$.

A more interesting basis is given by $(v_j)_{j\in \N}$, where 
\begin{displaymath}
  v_0(x)=\frac{1}{\sqrt{NL}},\quad v_{2j}(x) = \sqrt{\frac{2}{NL}}\sin\left(\frac{2j\pi x}{NL}\right),\quad \text{and}\quad v_{2j+1}(x) = \sqrt{\frac{2}{NL}}\cos\left(\frac{2j\pi x}{NL}\right). 
\end{displaymath}
In this case we have, $\int |v_{2j}(x)|^2 f(x)\,\dd x = \frac{1}{NL}\int f(x)\,\dd x - \frac{1}{NL}\int f(x)\cos(\frac{4j\pi x}{NL})\,\dd x$ as well as $\int |v_{2j+1}(x)|^2f(x)\,\dd x = \frac{1}{NL} \int f(x)\,\dd x + \frac{1}{NL}\int f(x)\cos(\frac{4j\pi x}{NL})\,\dd x$. Hence,
\[
\frac{1}{N(I)}\sum_{\lambda_j\in I} \left|\langle v_j, fv_j\rangle - \langle f\rangle\right|^2 = \frac{1}{N(I)}\sum_{\lambda_j\in I} \left|\frac{1}{NL}\int f(x) \cos\Big(\frac{4j\pi x}{NL}\Big)\,\dd x\right|^2 \,.
\]
But $\sum_{j\in \N} |\sqrt{\frac{2}{NL}}\int \cos(\frac{4j\pi x}{NL})f(x)\,\dd x|^2 = \sum_{j\in \N} |\langle v_{4j+1}, f\rangle|^2 \le \|f\|_{L^2(\mathbf{I}_N)}^2 \le NL$, assuming $|f(x)|\le 1$. Hence,
\[
\frac{1}{N(I)}\sum_{\lambda_j\in I} \left|\langle v_j, fv_j\rangle - \langle f\rangle\right|^2 \le \frac{1}{N(I)} \cdot \frac{1}{NL} \cdot NL = \frac{1}{N(I)} \to 0
\]
as $N\to \infty$, since $N(I) \to \infty$. In fact, $\lambda_j\in [a,b]$ iff $(\frac{2\pi j}{NL})^2\in [a,b]$, i.e. $j\in [\frac{NL}{2\pi}\sqrt{a}, \frac{NL}{2\pi}\sqrt{b}]$, so $N(I) = c_I \cdot N$. This proves (\ref{e:main'}) for this basis.

However, note that for $\lambda_j\in I$, so that $j = \frac{NL}{2\pi}c_j$ for some $c_j\in [\sqrt{a},\sqrt{b}]$, we have $\int |v_{2j}(x)|^2f(x) = \frac{2}{NL}\int f(x)\sin^2(c_jx)\,\dd x$. We may choose observables such that this is not close to the uniform average. In fact, taking $f(x)=\cos(2c_j x)$ yields the error $\frac{1}{NL}\int f(x)\cos(2c_j x) = \frac{1}{2}$.
Therefore, in this case, we do not have $\int_{\mathbf{G}_N} |\psi_n^{(N)}(x)|^2f(x)\,\dd x \approx \frac{1}{|\mathbf{G}_N|}\int_{\mathbf{G}_N} f(x)\,\dd x$ for \emph{all} $\lambda_n^{(N)}\in I$ and all observables $f$.

This example shows that even in very simple graphs, a naive statement of QUE will not hold in the large $N$ limit. The fact that the eigenfunctions and observables vary with $N$ makes things quite complicated, so the pertinent formulation is not entirely clear at this point. Note that in contrast, for fixed $N$, say $N=1$, the same basis satisfies QUE in the high energy limit. In that case, $|\langle v_j, fv_j\rangle - \langle f \rangle| = |\frac{1}{L} \int_0^L f(x) \cos(4j\pi x/L)\, \mathrm{d}x| \to 0$ as $j\to \infty$ by the Riemann-Lebesgue lemma.

\end{rem}

\section{Relationship between the continuous and discrete problems}\label{sec:relations}

In this section, we derive some preliminary relations between the eigenfunctions of $H_{\mathbf{G}}$ and $\cA_G$ in \S~\ref{sec:nonback}, and the Green's functions of $H_{\mathbf{T}_q}$ and $\cA_{\T_q}$ in \S~\ref{sec:greenkernel}, which will be used later to prove the main result. We conclude this section by recalling the basic properties of the spherical function of $\T_q$ in \S~\ref{sec:spherico}.

\subsection{Non-backtracking eigenfunctions}\label{sec:nonback}

Given $\gamma\in\C$, we set
\begin{equation}\label{e:wlambda}
w(\gamma) := (q+1)c(\gamma)+\alpha s(\gamma) \,.
\end{equation}
As in \cite{Car97}, we consider
\begin{equation}\label{e:mupm}
\mu^{\pm}(\gamma) := \frac{w(\gamma)\pm \sqrt{w(\gamma)^2-4q}}{2q} .
\end{equation}
This function arises in the study of transfer matrices on the quantum tree. Note that for $\lambda\in \sigma_{\mathrm{ac}}(H_{\mathbf{T}_q})$, the $\mu^{\pm}(\lambda)$ are complex conjugate (see \eqref{e:spechh}), with $|\mu^{\pm}(\lambda)|^2=\frac{1}{q}$. In general,
\begin{equation}\label{e:muplusmoins}
\mu^+(\gamma)\mu^-(\gamma) = \frac{1}{q} \,, \qquad q\left(\mu^+(\gamma) + \mu^-(\gamma)\right) = w(\gamma) \,.
\end{equation}

If $(\lambda_j^{(N)})_{j\in\N}\subset \R$ is the discrete spectrum of $H_{\mathbf{G}_N}$, we shall denote $\mu_j^{\pm} := \mu^{\pm}(\lambda_j^{(N)})$.  We drop the superscripts $(N)$ from notations where it will not
lead to confusion.

Fix $G=G_N$. Given an orthonormal basis $(\psi_j)$ of eigenfunctions of $H_{\mathbf{G}}$ with eigenvalues $\lambda_j$, we define the discrete function $\mathring{\psi}_j$ on $V_N$ by
\begin{equation}\label{eq:defNewPsi}
\mathring{\psi}_j(v) = \psi_j(v) \,.
\end{equation}
This is well-defined as $\psi_j$ satisfies Kirchhoff conditions.

Let $B$ be the set of directed edges of $G_N$, so $|B| = 2\,|E_N| = N(q+1)$. Each edge $b\in B$ has an origin $o_b\in V_N$ and a terminus $t_b\in V_N$.
We denote by $\hat{b}$ the reversal of $b$.

Consider the \emph{non-backtracking operator} $\cB$ defined on $\ell^2(B)$ by
\[
(\cB f)(b) = \sum_{b^+ \in \cN_b^+} f(b^+) \,,
\]
where $\cN_b^+$ is the set of edges $b'$ with $o_{b'}=t_b$ and $b'\neq \hat{b}$, i.e.\ the set of outgoing edges from $b$. Define $\tau_{\pm}:\C^{V_N} \to \C^B$ by
\[
(\tau_+\psi)(b) = \psi(t_b) \qquad \text{and} \qquad (\tau_-\psi)(b) = \psi(o_b) \,.
\]
Inspired by \cite{A}, we define 
\begin{equation}\label{e:fjfjtoile}
f_j = \tau_+ \mathring{\psi}_j - \mu^-_j \tau_- \mathring{\psi}_j \qquad \text{and} \qquad f_j^{\ast} = \tau_-\mathring{\psi}_j - \mu^-_j \tau_+ \mathring{\psi}_j \,.
\end{equation}

\begin{lem}\label{lem:nonback}
The following properties hold:
\begin{equation}\label{e:discreteeigen}
\cA_{G} \mathring{\psi}_j = w(\lambda_j) \mathring{\psi}_j\,,
\end{equation}
\begin{equation}\label{e:nonback}
\mu^-_j \cB f_j = f_j\qquad \text{and}\qquad \mu^-_j \cB^{\ast} f_j^{\ast} = f_j^{\ast} \,.
\end{equation}
\end{lem}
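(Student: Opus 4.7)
My plan is to prove \eqref{e:discreteeigen} first, and then derive \eqref{e:nonback} essentially as an algebraic consequence of \eqref{e:discreteeigen} combined with the defining relations \eqref{e:muplusmoins}.

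To obtain \eqref{e:discreteeigen}, I will reconstruct $\psi_j$ edge-by-edge from its vertex values. Fix an edge $e$ with $o_e=v$ and $t_e=v'$. Since $\psi_j$ satisfies \eqref{e:eigenproblem}, the transfer matrix relation \eqref{e:eiganmat} applied to $\bigl(\psi_j(v),\psi_j'(0)\bigr)^T$ yields
\[
\psi_j'(0) = \frac{\mathring{\psi}_j(v')-c(\lambda_j)\mathring{\psi}_j(v)}{s(\lambda_j)}.
\]
For an edge $e'$ with $t_{e'}=v$ and $o_{e'}=v''$, the same matrix computation gives $\psi_{e'}'(0) = [\mathring{\psi}_j(v)-c(\lambda_j)\mathring{\psi}_j(v'')]/s(\lambda_j)$, and using $s'(\lambda_j)=c(\lambda_j)$ \eqref{eq:FromCtoS} together with the Wronskian identity \eqref{eq:Wronskian} I find
\[
\psi_{e'}'(L) = \frac{c(\lambda_j)\mathring{\psi}_j(v)-\mathring{\psi}_j(v'')}{s(\lambda_j)}.
\]
Inserting these two expressions into the current conservation condition at $v$ produces the same ratio $\bigl[\mathring{\psi}_j(v_e)-c(\lambda_j)\mathring{\psi}_j(v)\bigr]/s(\lambda_j)$ from every incident edge, so after summing over the $q+1$ neighbors I get
\[
\frac{(\cA_G\mathring{\psi}_j)(v)-(q+1)c(\lambda_j)\mathring{\psi}_j(v)}{s(\lambda_j)} = \alpha\,\mathring{\psi}_j(v),
\]
which rearranges to \eqref{e:discreteeigen}. (The case $s(\lambda_j)=0$ falls in $\sigma_2$ and forces $\mathring{\psi}_j\equiv 0$, in which case \eqref{e:discreteeigen} is trivial.)

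For \eqref{e:nonback}, I apply $\cB$ directly to $f_j$. The out-neighbors of $b$ are the $q$ directed edges leaving $t_b$ other than $\hat b$, so
\[
(\cB f_j)(b) = \sum_{b^+\in\cN_b^+}\bigl[\mathring{\psi}_j(t_{b^+})-\mu_j^-\mathring{\psi}_j(t_b)\bigr] = \Bigl[\sum_{v\sim t_b,\,v\neq o_b}\mathring{\psi}_j(v)\Bigr]-q\mu_j^-\mathring{\psi}_j(t_b).
\]
Writing the bracketed sum as $(\cA_G\mathring{\psi}_j)(t_b)-\mathring{\psi}_j(o_b)$ and invoking \eqref{e:discreteeigen}, I obtain
\[
(\cB f_j)(b) = \bigl[w(\lambda_j)-q\mu_j^-\bigr]\mathring{\psi}_j(t_b)-\mathring{\psi}_j(o_b).
\]
By \eqref{e:muplusmoins}, $w(\lambda_j)-q\mu_j^-=q\mu_j^+$ and $q\mu_j^+\mu_j^-=1$, so the bracket equals $1/\mu_j^-$; multiplying through by $\mu_j^-$ gives exactly $f_j(b)$. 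This is the heart of the lemma: the two relations in \eqref{e:muplusmoins} are precisely what is needed to close the eigenvalue equation for $\cB$.

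For the second identity, I will use the reflection $(Jg)(b):=g(\hat b)$. A direct comparison of matrix entries shows $\cB^*=J\cB J$, and by inspection $f_j^* = Jf_j$. Therefore $\mu_j^-\cB^* f_j^* = \mu_j^- J\cB J\cdot J f_j = J(\mu_j^-\cB f_j) = Jf_j = f_j^*$, which concludes the proof. The only even mildly subtle point is to handle orientation carefully in the current-conservation calculation so that outgoing and incoming edges contribute the same symmetric expression; this is where the symmetry hypothesis on $U$ enters crucially via $s'(\lambda)=c(\lambda)$.
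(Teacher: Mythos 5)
Your proposal is correct and follows essentially the same route as the paper: the identity \eqref{e:discreteeigen} comes from combining the transfer-matrix relation \eqref{e:eiganmat} with the Kirchhoff conditions (using $s'=c$ and the Wronskian), and \eqref{e:nonback} then follows from the algebra $w(\lambda_j)-q\mu_j^-=q\mu_j^+=1/\mu_j^-$ and the conjugation $\cB^{\ast}=\iota\cB\iota$, $f_j^{\ast}=\iota f_j$, exactly as in the paper. The only slip is your parenthetical on the degenerate case: $s(\lambda_j)=0$ does \emph{not} force $\mathring{\psi}_j\equiv 0$ (e.g.\ when $c(\lambda_j)=\pm1$ one can have $\mathring{\psi}_j$ constant, or alternating on a bipartite graph, and \eqref{e:discreteeigen} still holds there since $\psi_j(t_e)=c(\lambda_j)\psi_j(o_e)$ directly gives $\cA_G\mathring{\psi}_j=(q+1)c(\lambda_j)\mathring{\psi}_j$); the paper's computation avoids this issue altogether by never dividing by $s(\lambda_j)$, whereas your reconstruction of $\psi_j'(0)$ and $\psi_j'(L)$ requires $s(\lambda_j)\neq 0$.
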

\begin{proof}
Let $v\in V_N$ and $b\in B$ be such that $v=t_b$. We write 
\[
(\cA_{G}\mathring{\psi}_j)(t_b) = \psi_j(o_b) + \sum_{b^+\in \cN_b^+} \psi_j(t_{b^+}).
\]
Recall that $\psi_j(t_{b^+}) = \psi_j(o_{b^+})c(\lambda_j) + \psi_j'(o_{b^+})s(\lambda_j)$. By the Kirchhoff conditions, $\psi_j(o_{b^+}) = \psi_j(t_b)$ and $\sum_{b^+\in \cN_b^+} \psi_j'(o_{b^+}) = \psi_j'(t_b) + \alpha \psi_j(t_b)$. We thus get 
\[
(\cA_{G}\mathring{\psi}_j)(t_b) = \psi_j(o_b) + [qc(\lambda_j) + \alpha s(\lambda_j)]\psi_j(t_b) + s(\lambda_j) \psi_j'(t_b).
\]
 By \eqref{e:eiganmat}, $\begin{pmatrix} \psi_j(o_b)\\\psi_j'(o_b)\end{pmatrix} =M_{\lambda_j}^{-1}\begin{pmatrix}\psi_j(t_b)\\\psi_j'(t_b)\end{pmatrix}$. It follows that $\psi_j(o_b) = c(\lambda_j)\psi_j(t_b)-s(\lambda_j)\psi_j'(t_b)$. Recalling \eqref{e:wlambda}, this proves \eqref{e:discreteeigen}. Next, 
\begin{align*}
(\cB f_j)(b) &= \sum_{b^+\in \cN_b^+} [\psi_j(t_{b^+}) - \mu_j^-\psi_j(o_{b^+})] \\
&= (\cA \mathring{\psi}_j)(t_b) - \psi_j(o_b) - q\mu_j^- \psi_j(t_b) \\
&= [w(\lambda_j) - q\mu_j^-]\psi_j(t_b) - \psi_j(o_b).
\end{align*}
Using \eqref{e:muplusmoins}, we get $(\cB f_j)(b) = q\mu_j^+ \psi_j(t_b) - \psi_j(o_b) = \frac{1}{\mu_j^-}f_j(b)$.

Finally, to prove that $\cB^{\ast} f_j^{\ast} = \frac{1}{\mu_j^-}f_j^{\ast}$, note that if $\iota$ is the edge reversal, i.e. $(\iota f)(x_0,x_1)=f(x_1,x_0)$, then $\cB^{\ast} = \iota \cB \iota$ and $f_j^{\ast} = \iota f_j$, so we deduce that $\cB^{\ast}f_j^{\ast} =\frac{1}{\mu_j^-}f_j^{\ast}$.
\end{proof}

\subsection{Relationship between the continuous and discrete resolvents}\label{sec:greenkernel}

Let $G_{\mathbf{T}_q}^{\gamma}(x,y)$ be the integral kernel of the resolvent of $H_{\mathbf{T}_q}$ at energy $\gamma\in \C$, and let $G_{\cA_{\T_q}}^{\gamma}(v,w)$ be the Green's function $(\cA_{\T_q}-\gamma)^{-1}(v,w)$. This is well-defined, at least when $\gamma\in \C^+ = \{\Im z>0\}$, so that $\gamma$ is away from the spectrum. On the real line, we denote
\begin{equation*}
G^\lambda(x,y) := G^{\lambda+\mathrm{i}0}(x,y) :=
\lim_{\eta\downarrow0} G^{\lambda+\mathrm{i}\eta}(x,y),\qquad\lambda\in\R
\end{equation*}
when this limit exists. We use this notation for Green's functions of different objects, indicated with subscripts.

Explicit expressions for the Green's function on the quantum tree were
worked-out by Carlson in \cite{Car97}. One fixes $b_0\in \mathbf{T}_q$, $b_0\equiv [0,L]$, and define for $x\in b_0$,
\[
U_{\gamma}(x) = -s(\gamma)C_{\gamma}(x) + \left[c(\gamma)-q\mu^+(\gamma)\right] S_{\gamma}(x)\,,
\]
\[
V_{\gamma}(x) = s(\gamma)C_{\gamma}(L-x) - \left[c(\gamma)-q\mu^+(\gamma)\right]S_{\gamma}(L-x) \,.
\]
Given $x_0\in b_0$, using the notations of \cite{ASW06}, we let $\mathbf{T}_{x_0}^+$ and $\mathbf{T}_{x_0}^-$ be the subtrees produced by cutting $\mathbf{T}$ at $x_0$. Any $b^+\in \mathbf{T}_{x_0}^+$ has the form $\cB^nb_0$, which is an abusive notation to say that $b^+$ can be reached by a non-backtracking path of length $n$ from $b_0$. Similarly, any $b^-\in \mathbf{T}_{x_0}^-$ takes the form $b^-=\cB^{\ast m} b_0$ for some $m$. We then define the functions
\[
\begin{cases} U_{\gamma;x_0}^-(y) = \left(\mu^-(\gamma)\right)^m U_{\gamma}(\check{y}) &\text{if } y\in \cB^{\ast m} b_0,\\ V_{\gamma;x_0}^+(y) = \left(\mu^-(\gamma)\right)^n V_{\gamma}(\check{y}) &\text{if } y\in \cB^n b_0,\end{cases}
\]
where $\check{y}$ is the point in $b_0$ at the corresponding position to $y$.
Then $U_{\gamma;x_0}^-$ and $V_{\gamma;x_0}^+$ are solutions to the Kirchhoff problem on $\mathbf{T}_{x_0}^-$ and $\mathbf{T}_{x_0}^+$, respectively. Given $x,y\in \mathbf{T}$, choose $o,v$ such that $x,y\in \mathbf{T}_o^+ \cap \mathbf{T}_v^-$. Define
\begin{equation}\label{eq:greensfunction}
G^{\gamma}_{\mathbf{T}_q}(x,y) = \begin{cases} \frac{U_{\gamma;v}^-(x) V_{\gamma;o}^+(y)}{W^{\gamma}_{v,o}(x)} &\text{if } y\in \mathbf{T}_x^+,\\ \frac{U_{\gamma;v}^-(y) V_{\gamma;o}^+(x)}{W^{\gamma}_{v,o}(x)} &\text{if } y\in \mathbf{T}_x^-,\end{cases}
\end{equation}
where $W^{\gamma}_{v,o}(x)$ is the Wronskian
\[
W^{\gamma}_{v,o}(x) = V_{\gamma;o}^+(x)(U_{\gamma;v}^-)'(x) - (V_{\gamma;o}^+)'(x)U_{\gamma;v}^-(x) \,.
\]
Then it is shown in \cite{Car97} that $G^{\gamma}_{\mathbf{T}_q}(x,y)$ is precisely the Green kernel of $H_{\mathbf{T}_q}$.

In Appendix~\ref{app:a1}, we prove the following lemma~:

\begin{lem}\label{lem:Greenkernel}
Let $\gamma\in \C^+$ and $v,w$ be vertices in $\T_q$. Then we have
\[
G_{\mathbf{T}_q}^{\gamma}(v,w) = -s(\gamma) G_{\cA_{\T_q}}^{w(\gamma)}(v,w) \,.
\]
If $\lambda\in \sigma_{\mathrm{ac}}(H_{\mathbf{T}_q})$, we also have $G_{\mathbf{T}_q}^{\lambda+\ii 0}(v,w) = -s(\lambda) G_{\cA_{\T_q}}^{w(\lambda)+\ii 0}(v,w)$.
\end{lem}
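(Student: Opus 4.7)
The plan is to fix a vertex $w$ of $\T_q$ and show that the restriction $g(v) := G^{\gamma}_{\mathbf{T}_q}(v,w)$ of the quantum Green's function to the vertex set of $\T_q$ satisfies the discrete resolvent equation
\[ (\cA_{\T_q} - w(\gamma))\,g = -s(\gamma)\,\delta_w. \]
Once this is in hand, inversion yields the first identity, via an argument closely paralleling the derivation of \eqref{e:discreteeigen} in Lemma~\ref{lem:nonback}.

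I would start with a vertex $v$ and its neighbor $v'$ along the unique edge $b$ joining them. Combining the transfer matrix \eqref{e:eiganmat} with the symmetry $c(\gamma)=s'(\gamma)$ from \eqref{eq:FromCtoS} yields (irrespective of whether $v=o_b$ or $v=t_b$)
\[ g(v') = c(\gamma)\,g(v) + s(\gamma)\,\partial_b g(v), \]
where $\partial_b g(v)$ denotes the derivative of $G^{\gamma}_{\mathbf{T}_q}(\cdot,w)$ at $v$ in the direction pointing into $b$ away from $v$. Summing over the $q+1$ neighbors of $v$,
\[ (\cA g)(v) = (q+1)c(\gamma)\,g(v) + s(\gamma)\sum_{b\ni v}\partial_b g(v). \]
At $v\neq w$, Kirchhoff's current conservation reads $\sum_{b\ni v}\partial_b g(v)=\alpha\,g(v)$, so $(\cA g)(v)=w(\gamma)\,g(v)$. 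At $v=w$, the unit delta source modifies Kirchhoff into $\sum_{b\ni w}\partial_b g(w) = \alpha\,g(w) - 1$, contributing an extra $-s(\gamma)$ and producing exactly $-s(\gamma)\,\delta_w$.

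For $\gamma\in\C^+$, the explicit form \eqref{eq:greensfunction} (together with $|\mu^-(\gamma)|<1/\sqrt q$) shows that $g$ decays exponentially in $d(\cdot,w)$, so $g\in\ell^2$. Whenever $w(\gamma)\notin\sigma(\cA_{\T_q})=[-2\sqrt q,2\sqrt q]$, the operator $\cA_{\T_q}-w(\gamma)$ is invertible and the first claim follows at once. Both sides of the desired identity are holomorphic in $\gamma$ and agree for $|\Im\gamma|$ large, where $w(\gamma)$ is manifestly off $\sigma(\cA_{\T_q})$; analytic continuation then extends the identity to all of $\C^+$. The boundary statement for $\gamma=\lambda+\ii 0$ with $\lambda\in\sigma_{\mathrm{ac}}(H_{\mathbf{T}_q})$ will follow by letting $\eta\downarrow 0$: by \eqref{e:spechh}, $\lambda\in\sigma_{\mathrm{ac}}(H_{\mathbf{T}_q})$ is equivalent to $w(\lambda)\in[-2\sqrt q,2\sqrt q]$, and the explicit expressions for both Green's functions extend continuously to the boundaries of their upper half-plane domains.

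The main subtlety to handle carefully will be the bookkeeping of the delta source at $v=w$, i.e.\ justifying the modified Kirchhoff condition $\sum_{b\ni w}\partial_b g(w)=\alpha\,g(w)-1$. This can be established either by testing the distributional identity $(H_{\mathbf{T}_q}-\gamma)G^{\gamma}(\cdot,w)=\delta_w$ against a smooth bump supported near $w$, or directly from the explicit formula \eqref{eq:greensfunction}. Once this is settled, everything else reduces to a routine transfer-matrix calculation and a standard analytic-continuation argument.
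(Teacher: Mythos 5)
Your proof is correct in its essentials but follows a genuinely different route from the paper. The paper's proof (Appendix~\ref{app:a1}) is purely computational: it evaluates Carlson's formula \eqref{eq:greensfunction} at pairs of vertices, using the explicit $U_\gamma$, $V_\gamma$ and the Wronskian, to obtain $G^{\gamma}_{\mathbf{T}_q}(v,w)=-s(\gamma)(\mu^-(\gamma))^{d(v,w)}/[(q+1)\mu^-(\gamma)-w(\gamma)]$, and then matches this against the known closed form of $G_{\cA_{\T_q}}^{z}$. You instead characterize $g=G^{\gamma}_{\mathbf{T}_q}(\cdot,w)|_{V}$ as the unique $\ell^2$ solution of $(\cA_{\T_q}-w(\gamma))g=-s(\gamma)\delta_w$, reusing the transfer-matrix computation of Lemma~\ref{lem:nonback} plus the modified Kirchhoff condition at the source (which is correct: integrating $(H-\gamma)g=\delta_w$ against a bump at $w$ gives $\sum_{b\ni w}\partial_b g(w)=\alpha g(w)-1$, hence the extra $-s(\gamma)$). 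Your approach is more conceptual and avoids invoking the closed form of $G_{\cA_{\T_q}}$, but note two points. First, you still need \eqref{eq:greensfunction} (or an equivalent a priori bound) to get the $\ell^2$ decay of $g$, so the argument is not independent of Carlson's formula. Second, your appeal to analytic continuation quietly presupposes that $\gamma\mapsto G_{\cA_{\T_q}}^{w(\gamma)}(v,w)$ is well defined and holomorphic on all of $\C^+$, i.e.\ that $w(\gamma)\notin[-2\sqrt q,2\sqrt q]$ for every $\gamma\in\C^+$; this is true but deserves a word (e.g.\ if $w(\gamma)$ lay in the AC spectrum of $\cA_{\T_q}$ with $s(\gamma)\neq0$, then $\delta_w$ would lie in the range of $\cA_{\T_q}-w(\gamma)$ on $\ell^2$, which fails since the Kesten--McKay density is positive there; and $s(\gamma)=0$ is impossible for non-real $\gamma$ by self-adjointness of the Dirichlet problem on $[0,L]$). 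With that observation the continuation step becomes unnecessary: the resolvent equation plus invertibility already gives the identity at every $\gamma\in\C^+$. Finally, your treatment of the boundary values is exactly as brief as the paper's: both of you identify $\lim_{\eta\downarrow0}G_{\cA_{\T_q}}^{w(\lambda+\ii\eta)}$ with $G_{\cA_{\T_q}}^{w(\lambda)+\ii0}$ without checking from which side of the cut $w(\lambda+\ii\eta)$ approaches the real axis (this depends on the sign of $w'$ on the band, and can produce the $-\ii0$ boundary value, i.e.\ a complex conjugate); since the lemma is only used downstream through the conjugation-invariant ratio $\Im G(v,w)/\Im G(o,o)$ in \eqref{eq:SphericalandResolvant}, this does not affect anything, but it is a shared imprecision rather than a gap specific to your argument.
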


\subsection{Spherical functions}\label{sec:spherico}
The spherical function of parameter $\lambda\in \R$ is given, for $d\in \N_0$, by
\begin{equation}\label{eq:DefSpherical}
\Phi_\lambda(d)= q^{-d/2} \left( \frac{2}{q+1} P_d\left(\frac{\lambda}{2\sqrt{q}}\right) + \frac{q-1}{q+1} Q_d\left(\frac{\lambda}{2\sqrt{q}}\right)\right),
\end{equation}
where $P_r(\cos\theta) = \cos r\theta$ and $Q_r(\cos\theta) = \frac{\sin(r+1)\theta}{\sin \theta}$ are the Chebyshev polynomials of the first and second kinds, respectively.

As is well-known, for $\lambda\in (-2\sqrt{q},2 \sqrt{q})$, we have
\begin{equation*}
\Phi_{\lambda}(d(v,w)) = \frac{\Im G_{\cA_{\T_q}}^{\lambda}(v,w)}{\Im G_{\cA_{\T_q}}^{\lambda}(o,o)},
\end{equation*}
 where $o\in \T_q$ is any vertex. This follows from the fact that $\Phi_{\lambda,v}(w) := \Phi_{\lambda}(d(v,w))$ is the unique (generalized) eigenfunction of $\cA_{\T_q}$ which is radial from $v$ and normalized as $\Phi_{\lambda,v}(v)=1$. So one just checks that the function on the right-hand side satisfies these properties. Here ``generalized'' means non-$\ell^2$ (the spectrum of $\cA_{\T_q}$ is purely AC).

Using Lemma~\ref{lem:Greenkernel}, we deduce that for $\lambda \in \sigma_{\mathrm{ac}}(H_{\mathbf{T}_q})$,
 \begin{equation}\label{eq:SphericalandResolvant}
 \Phi_{w(\lambda)}(k) = \frac{\Im G_{\mathbf{T}_q}^{\lambda}(v,w)}{\Im G_{\mathbf{T}_q}^{\lambda}(o,o)}
 \end{equation}
for vertices $v$, $w$ separated by a distance $k$ on the tree.
 
The spherical function is also the unique solution of the recursive relation
\begin{equation}\label{eq:RecursSpherical}
\Phi_\lambda(d) = \frac{1}{q}\left(\lambda\Phi_\lambda(d-1)- \Phi_\lambda(d-2)\right)
\end{equation}
with initial conditions
\begin{equation}\label{e:inisphe}
 \Phi_\lambda(1) = \frac{\lambda}{q+1} \qquad\text{and}\qquad  \Phi_\lambda(0) = 1.
\end{equation}

\section{Proof of the main theorem}\label{sec:equiregproof}

To prove Theorem \ref{thm:equireg} (which contains Theorem~\ref{cor:equireg}
as a special case) we need to show that the quantum variance
\begin{equation}
  \label{eq:quant_variance}
 \frac{1}{N(I)} \sum_{\lambda_n^{(N)}\in I} \left|\langle \psi_n^{(N)}, f\psi_n^{(N)}\rangle -\langle f\rangle_{\lambda_n^{(N)}} \right|^2   
\end{equation}
vanishes as $N\to\infty$. Recall that $\langle \psi_n,f\psi_n\rangle = \sum_{e\in E_N} \int_0^Lf_e(x_e)|\psi_n(x_e)|^2\,\dd x_e$ and $\langle f\rangle_{\lambda_n}$ is defined in \eqref{e:weightedav}.

Our strategy is as follows. We first show in \S~\ref{sec:discrired} that this quantum variance involving functions on the metric graph, can be bounded from above by certain variances defined on the \emph{discrete} graph. Let us introduce some notations to clarify this point. 

If $k\in \N$, we denote by $\mathscr{H}_k = \C^{B_k}$ the set of complex-valued functions on the set $B_k$ of (discrete) non-backtracking paths of length $k$ in $G=G_N$. By convention, $B_0=V_N$ and $B_1 = B$. 

Given $K^{\lambda}\in\mathscr{H}_k$, a family of observables depending on
$\lambda$, we define the \emph{discrete} quantum variance
\begin{equation}\label{e:vari}
  \vari(K^{\lambda}) = \frac{1}{N(I)}\sum_{\lambda_n\in I} \left|\langle
    \mathring{\psi}_n, K_G^{\lambda_n} \mathring{\psi}_n\rangle\right|^2 ,
\end{equation}
where the functions  $\mathring{\psi}_n:V_N\to\C$ were defined in \eqref{eq:defNewPsi} and
\begin{equation}\label{e:kg}
  \langle \varphi, K_G \psi \rangle = \sum_{(x_0;x_k)\in B_k}
  \overline{\varphi(x_0)}K(x_0;x_k)\psi(x_k) \,.
\end{equation}

Then the main result of \S~\ref{sec:discrired} appears as \eqref{eq:simple_bound}. The main difficulty with the resulting variances is that the discrete observables $K_{f,n}^{\lambda},J_{f,n}^{\lambda}$ and $M_{f,n}^{\lambda}$ depend on the energy (although the original continuous observable $f$ does not !). In fact, as one sees from \eqref{e:vari}, each eigenfunction $\mathring{\psi}_n$ comes with its own observable, generically denoted $K^{\lambda_n}$. This difficulty can be avoided in the special case of Theorem~\ref{cor:equireg}, where the observable $f$ is constant on each edge. See Lemma~\ref{lem:reduction} and the comment after Remark~\ref{rem:Weyl}. In fact, the analysis greatly simplifies in this case; one may bound the variance from above by a Hilbert-Schmidt norm without much difficulty (Lemma~\ref{lem:HSfacile}), then proceed to control this norm using the tools of quantum ergodicity for regular combinatorial graphs developed in \cite{ALM,A}. There is no need to speak about Green's functions at this point.

The general case is more delicate. To handle the energy-dependent observables $K^{\lambda_n}$, we use the non-backtracking eigenfunctions $f_j$, $f_j^\ast$ from \eqref{e:fjfjtoile}. We define the \emph{non-backtracking quantum variance} by
\begin{equation}\label{e:varnbi}
\varnbi(K^{\lambda}) = \frac{1}{N(I)}\sum_{\lambda_j\in I} \left|\langle f_j^{\ast}, K_B^{\lambda_j} f_j\rangle\right|^2
\end{equation}
for $K^{\lambda_j}\in \mathscr{H}_k$, where we denoted
\begin{equation}\label{e:kb}
\langle f, K_B g \rangle =\sum_{(x_0;x_k)\in B_k} \overline{f(x_0,x_1)}K(x_0;x_k)g(x_{k-1},x_k) \,.
\end{equation}

We show in \S~\ref{sec:discnonba} how to reduce the control of $\vari(K^{\lambda})$ to a control of $\varnbi(\tilde{K}^{\lambda})$. The result appears as \eqref{e:suff2}. This reduction is not new, so we briefly summarize it for the reader's convenience. Our next aim in \S~\ref{sec:vartonorm} is to give an upper bound for these variances in terms of a Hilbert-Schmidt norm, see Lemma~\ref{lem:HSgen}. This result is quite technical. One cannot apply the results of \cite{A,AS2} directly to the operators $J^m$ appearing in the proof. Still, we follow the general arguments of \cite[Section 4]{AS2}, and we were able to simplify many ideas for the specific framework of our paper. Now that we finally have Hilbert-Schmidt norms of discrete observables, we can follow the scheme of \cite{A} to conclude the proof.

So the case of general observables is more difficult, but worth the effort. As discussed in \S~\ref{sec:consequences}, it is this result that allows us to understand the behaviour of $|\psi_n(x)|^2$ \emph{inside} the edges. Our result says that the limiting measure will not be the Lebesgue measure in general, but will have a positive density $\Psi_{\lambda_n}(x)$ instead. A first description of $\Psi_{\lambda}$ appears in \eqref{e:psibad} while following the reduction. This definition, involving the trigonometric function $S_{\lambda}(x)$ introduced in \eqref{sec:DefQG}, doesn't really give any insight on the reason why the probability densities $|\psi_n(x)|^2$ approach this limit. This is why we spend certain effort to relate this to the Green function of the infinite quantum tree, by investigating further the spectral analysis of $H_{\mathbf{T}_q}$ initiated in \cite{Car97}. The result appears as \eqref{eq:2}, and finally gives the interesting interpretation that $|\psi_n(x)|^2$ will vary (in $x$) like the relative spectral density of $H_{\mathbf{T}_q}$, which is encoded by a quotient of Green's functions. We believe this is a quite important part of the paper, but the computations are heavy, so we collected them in an appendix. 

\subsection{Reduction to a discrete quantum variance} \label{sec:discrired}
We begin by showing that that $\|\psi_n\|_{L^2(\mathbf{G}_N)}$ can be expressed in terms of $\|\mathring{\psi}_n\|_{\ell^2(V_N)}$.

\begin{lem}
Let $(\psi_n)$ be an orthonormal basis of eigenfunctions with corresponding eigenvalues $(\lambda_n)$. For any $n$ such that $\lambda_n\notin \sigma_2$, we have
\begin{multline}\label{e:norsi}
\|\psi_n\|^2 = \frac{q+1}{s^2(\lambda_n)}\left(\int_0^L S^2_{\lambda_n}(x)\,\dd x\right)\|\mathring{\psi}_n\|^2 \\
 + \frac{1}{s^2(\lambda_n)}\left( \int_0^{L} S_{\lambda_n}(L-x)S_{\lambda_n}(x)\,\dd x\right)w(\lambda_n)\|\mathring{\psi}_n\|^2\,.
\end{multline}
\end{lem}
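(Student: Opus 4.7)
The plan is to express $\psi_n$ on each edge as an explicit combination of its boundary values (which are exactly $\mathring{\psi}_n$ evaluated at the endpoints), integrate $|\psi_n|^2$ edge by edge, and then sum over edges. All the identities listed in \S\ref{sec:DefQG}, together with the eigenvalue relation \eqref{e:discreteeigen}, will do the bookkeeping.

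First, fix an edge $e \equiv [0,L]$ with origin $v=o_e$ and terminus $v'=t_e$. Since $\lambda_n \notin \sigma_2$ we have $s(\lambda_n)\neq 0$, so from \eqref{e:eiganmat} we can solve for $\psi_e'(0)$ in terms of $\psi_e(0)=\mathring{\psi}_n(v)$ and $\psi_e(L)=\mathring{\psi}_n(v')$, getting $\psi_e'(0) = [\mathring{\psi}_n(v') - c(\lambda_n)\mathring{\psi}_n(v)]/s(\lambda_n)$. Substituting into $\psi_e(x) = \psi_e(0)C_{\lambda_n}(x) + \psi_e'(0)S_{\lambda_n}(x)$ and using identity \eqref{e:simplif} to rewrite $s(\lambda_n)C_{\lambda_n}(x) - c(\lambda_n)S_{\lambda_n}(x) = S_{\lambda_n}(L-x)$, one obtains the explicit formula
\begin{equation*}
\psi_e(x) = \frac{\mathring{\psi}_n(v)\,S_{\lambda_n}(L-x) + \mathring{\psi}_n(v')\,S_{\lambda_n}(x)}{s(\lambda_n)}.
\end{equation*}

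Next, squaring and integrating over $[0,L]$ and using the change of variable $x \mapsto L-x$ to see that $\int_0^L S^2_{\lambda_n}(L-x)\,\dd x = \int_0^L S^2_{\lambda_n}(x)\,\dd x$, I obtain
\begin{equation*}
\int_0^L |\psi_e(x)|^2\,\dd x = \frac{1}{s^2(\lambda_n)}\Big[ \big(|\mathring{\psi}_n(v)|^2 + |\mathring{\psi}_n(v')|^2\big)\!\!\int_0^L\!\! S^2_{\lambda_n} + 2\,\Re\!\big(\mathring{\psi}_n(v)\overline{\mathring{\psi}_n(v')}\big)\!\!\int_0^L\!\! S_{\lambda_n}(L-x)S_{\lambda_n}(x)\,\dd x \Big].
\end{equation*}

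Finally, I sum over $e \in E_N$. For the first bracket, each vertex $v$ is counted with multiplicity equal to its degree $q+1$, giving $(q+1)\|\mathring{\psi}_n\|_{\ell^2(V_N)}^2$. For the cross term, rewriting the sum over undirected edges as a sum over directed bonds gives $\sum_{e}2\,\Re(\mathring{\psi}_n(o_e)\overline{\mathring{\psi}_n(t_e)}) = \sum_{b\in B}\overline{\mathring{\psi}_n(o_b)}\mathring{\psi}_n(t_b) = \langle \mathring{\psi}_n, \cA_G\mathring{\psi}_n\rangle$, which by Lemma~\ref{lem:nonback} (equation \eqref{e:discreteeigen}) equals $w(\lambda_n)\|\mathring{\psi}_n\|^2$. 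Combining the two contributions yields exactly \eqref{e:norsi}. No real obstacle is expected: the only subtlety is a careful accounting of the orientation when converting $\sum_e$ into $\sum_b$, and the hypothesis $\lambda_n \notin \sigma_2$ is used solely to ensure the division by $s(\lambda_n)$ is legitimate.
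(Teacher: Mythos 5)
Your proposal is correct and follows essentially the same route as the paper: expressing $\psi_e$ on each edge via \eqref{e:simplif} as $\frac{1}{s(\lambda_n)}\bigl(\mathring{\psi}_n(o_e)S_{\lambda_n}(L-x)+\mathring{\psi}_n(t_e)S_{\lambda_n}(x)\bigr)$, integrating, and summing with \eqref{e:discreteeigen} to identify the cross term as $w(\lambda_n)\|\mathring{\psi}_n\|^2$. The only cosmetic difference is that you sum over undirected edges while the paper sums over directed bonds with a factor $\tfrac12$; your orientation bookkeeping for both the degree count and the adjacency term is accurate.
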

\begin{proof}
We may write
$\psi_n(x_b) = \psi_n(o_b) C_{\lambda_n}(x_b) + \psi_n'(o_b) S_{\lambda_n}(x_b)$,
expressing $\psi_n$ in the basis $C_{\lambda_n}(x)$ and $S_{\lambda_n}(x)$.
Since $\psi_n(t_b) = \psi_n(o_b) c(\lambda_n) + \psi_n'(o_b) s(\lambda_n)$, and since $\lambda_n\notin \sigma_2$ implies $s(\lambda_n)\neq 0$, we get using \eqref{e:simplif},
\begin{equation}\label{e:psinegal}
\psi_n(x_b) = \frac{S_{\lambda_n}(L-x_b)}{s(\lambda_n)} \psi_n(o_b) + \frac{S_{\lambda_n}(x_b)}{s(\lambda_n)}\psi_n(t_b)
\end{equation}

Thus,
\begin{align}
\int_0^{L} f_b(x)|\psi_n(x)|^2\,\dd x & = \frac{|\psi_n(o_b)|^2}{s^2(\lambda_n)} \int_0^{L} f_b(x)S^2_{\lambda_n}(L-x)\,\dd x \nonumber \\
&\quad + \frac{\overline{\psi_n(o_b)}\psi_n(t_b) + \psi_n(o_b)\overline{\psi_n(t_b)}}{s^2(\lambda_n)} \int_0^{L} f_b(x) S_{\lambda_n}(L-x)S_{\lambda_n}(x)\,\dd x 
\nonumber\\
&\quad + \frac{|\psi_n(t_b)|^2}{s^2(\lambda_n)}\int_0^{L} f_b(x)S^2_{\lambda_n}(x)\,\dd x \,. \label{eq:decompoSP}
\end{align}

Consider the special case where $f_b\equiv 1$ for all $b$. Since $\|\psi_n\|^2 = \frac{1}{2}\sum_{b\in B}\int_0^L |\psi_n(x)|^2\,\dd x$ (the half is due to directed edges), and since $\int_0^LS^2_{\lambda_n}(L-x)\,\dd x = \int_0^L S^2_{\lambda_n}(x)\,\dd x$, we get
\begin{align*}
2\,\|\psi_n\|^2 & = \frac{1}{s^2(\lambda_n)}\left( \int_0^L S^2_{\lambda_n}(x)\,\dd x\right)\left(\sum_{b\in B}|\psi_n(o_b)|^2 + \sum_{b\in B}|\psi_n(t_b)|^2\right)\\
& \quad + \frac{1}{s^2(\lambda_n)}\left( \int_0^{L} S_{\lambda_n}(L-x)S_{\lambda_n}(x)\,\dd x\right)\sum_{b\in B}(\overline{\psi_n(o_b)}\psi_n(t_b) + \psi_n(o_b)\overline{\psi_n(t_b)})\,.
\end{align*}
Now $\sum_b |\psi_n(o_b)|^2=\sum_b|\psi_n(t_b)|^2=(q+1)\|\mathring{\psi}_n\|^2$, $\sum_b \overline{\psi_n(o_b)}\psi_n(t_b) = \langle \mathring{\psi}_n,\cA_{G_N}\mathring{\psi}_n\rangle$ and $\sum_b\overline{\psi_n(t_b)}\psi_n(o_b) = \langle \mathcal{A}_{G_N}\mathring{\psi}_n,\mathring{\psi}_n\rangle$. 

The result then follows from \eqref{e:discreteeigen}, which tells us that $\cA_{G_N}\mathring{\psi}_n = w(\lambda_n)\mathring{\psi}_n$.
\end{proof}

Further to \eqref{e:vari} and \eqref{e:kg} we also introduce
\begin{equation}\label{e:avovk}
\langle K \rangle = \frac{1}{N} \sum_{(x_0;x_k)\in B_k} K(x_0;x_k) \quad \text{and} \quad \mathscr{H}_k^o = \{K\in \mathscr{H}_k : \langle K \rangle = 0 \}
\end{equation}
and notice that if $S_k\in \mathscr{H}_k$ are the constant functions 
\begin{equation}\label{eq:DefSk}
S_0(x_0) = 1 \qquad \text{and} \qquad S_k(x_0; x_k) = \frac1{(q+1)q^{k-1}}
\end{equation}
for $k\ge 1$, then for any $K\in \mathscr{H}_k$, we have $K-\langle K \rangle S_k \in \mathscr{H}_k^o$. 

The following lemma allows us to reduce the statement of Theorem \ref{thm:equireg} to a bound on the variances of three operators. Recall that $\kappa_{\lambda_n}$ and $\langle f\rangle_{\lambda_n}$ were defined in Theorem \ref{thm:equireg}.

\begin{lem}\label{lem:reduction}
Let $(\psi_n)$ be an orthonormal basis of eigenfunctions of $H_{\mathbf{G}_N}$ with eigenvalues $(\lambda_n)$. Assume $\lambda_n\in \overline{I} \subset  \sigma_{\mathrm{ac}}(H_{\mathbf{T}_q})$. Then:
\begin{enumerate}[\rm (i)]
\item We have
\begin{equation}\label{e:normpsidis}
\|\mathring{\psi}_n\|^2  =\frac{1}{\kappa_{\lambda_n}} \le C_{L,I}
\end{equation}
for some constant $C_{L,I}$;
\item
Given a function $f=(f_e)$ on the quantum graph $\mathbf{G}_N$, define $K_{f,n},J_{f,n}\in\mathscr{H}_0$ and $M_{f,n}\in \mathscr{H}_1$ by
\[
K_{f,n}(x_0) = \sum_{x_1\sim x_0} \frac{1}{s^2(\lambda_n)}\int_0^Lf_{(x_0,x_1)}(t)S^2_{\lambda_n}(L-t)\,\dd t \,,
\]
\[
J_{f,n}(x_1) = \sum_{x_0\sim x_1}  \frac{1}{s^2(\lambda_n)}\int_0^Lf_{(x_0,x_1)}(t)S^2_{\lambda_n}(t)\,\dd t \,,
\]
\[
M_{f,n}(x_0,x_1) = \frac{2}{s^2(\lambda_n)}\int_0^Lf_{(x_0,x_1)}(t)S_{\lambda_n}(L-t)S_{\lambda_n}(t)\,\dd t \,.
\]
Then
\begin{equation}\label{eq:ThreeTerms}
2 \,\langle \psi_n, f\psi_n\rangle = \langle \mathring{\psi}_n, (K_{f,n}+J_{f,n}+M_{f,n})_G\mathring{\psi}_n\rangle \,.
\end{equation}
Moreover, we have
\begin{multline}\label{e:withav}
2\left(\langle \psi_n, f\psi_n\rangle - \langle f\rangle_{\lambda_n} \right) = \langle \mathring{\psi}_n, (K_{f,n}-\langle K_{f,n}\rangle S_0)_G \mathring{\psi}_n\rangle + \langle \mathring{\psi}_n, (J_{f,n}-\langle J_{f,n} \rangle S_0) \mathring{\psi}_n\rangle \\+ \langle \mathring{\psi}_n, (M_{f,n}-\langle M_{f,n} \rangle S_1)_G \mathring{\psi}_n \rangle \,;
\end{multline}

\item
If $f$ is locally constant, i.e. $f_e(x) = c_e$ and $|c_e|\le 1$, define $K_f\in\mathscr{H}_0$ and $M_f\in\mathscr{H}_1$ by $K_f(x_0) = \sum_{x_1\sim x_0} c_{(x_0,x_1)}$ and $M_f(x_0,x_1) = c_{(x_0,x_1)}$. Then we get for $I\subset \sigma_{\mathrm{ac}}(H_{\mathbf{T}_q})$ and $\langle f\rangle$ as in Theorem~\ref{cor:equireg},
\[
\frac{1}{N(I)}\sum_{\lambda_n\in I} \left|\langle \psi_n,f\psi_n\rangle - \langle f\rangle\right|^2 \le C'_{L,I} \left(\vari(K_f-\langle K_f\rangle S_0) +\vari(M_f - \langle M_f\rangle S_1)\right) ,
\]
for some constant $C'_{L,I}$.
\end{enumerate}
\end{lem}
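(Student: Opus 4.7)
My plan is to derive all three parts from the single identity \eqref{eq:decompoSP} applied bond by bond, combined with the structural fact \eqref{e:discreteeigen} that $\mathring{\psi}_n$ is an eigenfunction of $\cA_G$ with eigenvalue $w(\lambda_n)$.

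For part (i), I would start from $\|\psi_n\|^2=1$ (by orthonormality of the basis) and substitute it into \eqref{e:norsi}; the right-hand side collapses to $\kappa_{\lambda_n}\|\mathring{\psi}_n\|^2$ by direct comparison with the definition \eqref{e:kappalambda}, yielding $\|\mathring{\psi}_n\|^2 = 1/\kappa_{\lambda_n}$. For the uniform bound, since $\overline{I}$ lies in a fixed band of AC spectrum it is disjoint from $\sigma_2$, so $s(\lambda)$ does not vanish on $\overline{I}$ and $\lambda\mapsto \kappa_\lambda$ is continuous there. Positivity on $\overline{I}$ follows either from inspecting \eqref{e:kappalambda} directly, or from the spectral interpretation of $\kappa_\lambda^{-1}$ as proportional to the density of the AC spectral measure of $H_{\mathbf{T}_q}$ at a vertex, which is strictly positive on $\sigma_{\mathrm{ac}}$. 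Compactness of $\overline{I}$ then yields $C_{L,I}:=\sup_{\lambda\in\overline{I}}\kappa_\lambda^{-1}<\infty$.

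For part (ii), I would apply \eqref{eq:decompoSP} on each directed bond $b\in B$ and sum, using $\psi_n(o_b)=\mathring{\psi}_n(o_b)$ and $\psi_n(t_b)=\mathring{\psi}_n(t_b)$ from the Kirchhoff continuity. The left-hand side sums to $2\langle\psi_n,f\psi_n\rangle$ because each undirected edge is traversed twice. On the right, I would group in three: the first term, after collecting by the origin $x_0=o_b$, produces $\langle \mathring{\psi}_n, K_{f,n}\mathring{\psi}_n\rangle$; the third term, after collecting by the terminus $x_1=t_b$, produces $\langle \mathring{\psi}_n, J_{f,n}\mathring{\psi}_n\rangle$; the two cross terms $\overline{\psi_n(o_b)}\psi_n(t_b)$ and $\psi_n(o_b)\overline{\psi_n(t_b)}$ combine via the involution $b\mapsto\hat b$ (together with the convention $f_{\hat b}(t)=f_b(L-t)$ and the symmetry of the kernel $S_\lambda(L-t)S_\lambda(t)$) into $\langle \mathring{\psi}_n, M_{f,n}\mathring{\psi}_n\rangle$, accounting for the factor $2$ in the definition of $M_{f,n}$. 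This establishes \eqref{eq:ThreeTerms}. To obtain \eqref{e:withav}, I would compute the constant modes: the $S_0$-pieces contribute $(\langle K_{f,n}\rangle+\langle J_{f,n}\rangle)\|\mathring{\psi}_n\|^2$, while the $S_1$-piece contributes $\frac{\langle M_{f,n}\rangle}{q+1}\langle \mathring{\psi}_n,\cA_G\mathring{\psi}_n\rangle = \frac{\langle M_{f,n}\rangle w(\lambda_n)}{q+1}\|\mathring{\psi}_n\|^2$ by \eqref{e:discreteeigen}. Inserting $\|\mathring{\psi}_n\|^2 = 1/\kappa_{\lambda_n}$ from part (i), the sum of these three constant pieces must match $2\langle f\rangle_{\lambda_n}$. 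This match is the substantive step: it requires rewriting $\Psi_{\lambda}(t)$ on each edge as an explicit quadratic combination of $S_\lambda(t)$, $S_\lambda(L-t)$ and $S_\lambda(L-t)S_\lambda(t)$ with the precise coefficients dictated by the three averaged terms, and is exactly the content of the forthcoming identity \eqref{eq:2}, obtained via the explicit Green's function formula \eqref{eq:greensfunction}.

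Part (iii) is then a direct specialization of (ii): for $f_e(x)=c_e$ with $|c_e|\le 1$ the definitions factor as $K_{f,n} = A_{\lambda_n} K_f$, $J_{f,n} = A_{\lambda_n} J_f = A_{\lambda_n} K_f$ (the second equality because $c_e$ depends only on the undirected edge), and $M_{f,n} = B_{\lambda_n} M_f$, where $A_\lambda = s(\lambda)^{-2}\int_0^L S_\lambda^2(t)\,\dd t$ and $B_\lambda = 2\,s(\lambda)^{-2}\int_0^L S_\lambda(L-t)S_\lambda(t)\,\dd t$ are both bounded on $\overline{I}$. Using $\int_0^L\Psi_\lambda = 2/(q+1)$ from \eqref{eq:3}, one checks that $\langle f\rangle_{\lambda_n} = \langle a\rangle = \langle f\rangle$ in this locally constant setting, so \eqref{e:withav} gives a pointwise bound for each $\lambda_n\in I$; squaring, averaging over $\lambda_n\in I$, and using $(x+y)^2 \le 2(x^2+y^2)$ yields the claimed inequality with $C'_{L,I}$ depending only on $\sup_{\lambda\in\overline{I}}(A_\lambda^2+B_\lambda^2)$. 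As noted above, the main obstacle in the entire lemma is the spectral identification in (ii) interpreting the explicit combination of $S_\lambda$-quantities produced by the three averaged discrete terms as the density $\Psi_\lambda$ defined by the quotient of Green's functions on $\mathbf{T}_q$; this is the reason that identification is deferred to an appendix.
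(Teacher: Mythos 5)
Your proposal is correct and follows essentially the same route as the paper's proof: part (i) from \eqref{e:norsi} and the normalization $\|\psi_n\|=1$, part (ii) by summing \eqref{eq:decompoSP} over directed bonds and matching the constant modes against $2\langle f\rangle_{\lambda_n}$ via $\cA_{G}\mathring{\psi}_n=w(\lambda_n)\mathring{\psi}_n$ and the Green's-function identity \eqref{eq:2}, and part (iii) by the factorizations $K_{f,n}=J_{f,n}=A_{\lambda_n}K_f$, $M_{f,n}=B_{\lambda_n}M_f$. The only point you leave slightly implicit is the strict positivity of $\kappa_\lambda$ on $\overline{I}$, which the paper makes explicit via Cauchy--Schwarz together with $|w(\lambda)|\le 2\sqrt{q}<q+1$, giving $\kappa_\lambda\ge\frac{q+1-2\sqrt{q}}{s^2(\lambda)}\int_0^L S_\lambda^2(t)\,\dd t>0$.
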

\begin{proof}
(i) The equality in (\ref{e:normpsidis}) follows from \eqref{e:norsi}, by the definition of $\kappa_{\lambda_n}$. To see the uniform bound, first note by Cauchy-Schwarz that $|\int_0^L S_{\lambda_n}(L-x)S_{\lambda_n}(x)\,\dd x| \le \int_0^L S^2_{\lambda_n}(x)\,\dd x$. We therefore have 
\[\kappa_{\lambda_n} \ge \frac{q+1 - |w(\lambda_n)|}{s^2(\lambda_n)}\int_0^LS^2_{\lambda_n}(x)\,\dd x.
\]
 As $\lambda_n\in \sigma_{\mathrm{ac}}(H_{\mathbf{T}_q})$, we have $|
w(\lambda_n)| \le 2\sqrt{q}$. Hence, $\kappa_{\lambda_n} \ge \frac{q+1-2\sqrt{q}}{s^2(\lambda_n)}\int_0^LS^2_{\lambda_n}(x)\,\dd x$. Finally, note that $I \ni \lambda \mapsto \frac{1}{s^2(\lambda)}\int_0^LS^2_{\lambda}(x)\,\dd x$ is continuous, so it reaches its minimum on $\overline{I} \subset \sigma_{\mathrm{ac}}(H_{\T_q})$, say at $\lambda_{\ast}$. This minimum cannot be zero, otherwise $S^2_{\lambda_{\ast}}(x)$ would be identically zero. Hence, $\frac{1}{s^2(\lambda_n)}\int_0^LS^2_{\lambda_n}(x)\,\dd x \ge c_{L,I}>0$, proving the claim.

(ii) Equation (\ref{eq:ThreeTerms}) readily follows from (\ref{eq:decompoSP}).
 To deduce \eqref{e:withav}, note that
\[
2\left(\langle \psi_n, f\psi_n\rangle - \langle f\rangle_{\lambda_n} \right) = \langle \mathring{\psi}_n, (K_{f,n}+J_{f,n}+M_{f,n})_G \mathring{\psi}_n\rangle - 2\,\langle f\rangle_{\lambda_n} \kappa_{\lambda_n}\cdot \|\mathring{\psi}_n\|^2\,.
\]
On the other hand,
\[
\langle \mathring{\psi}_n, (\langle K_{f,n}\rangle S_0 + \langle J_{f,n}\rangle S_0 + \langle M_{f,n}\rangle S_1)_G\mathring{\psi}_n\rangle = \Big(\langle K_{f,n} \rangle + \langle J_{f,n}\rangle + \frac{\langle M_{f,n}\rangle}{q+1} w(\lambda_n)\Big)\cdot\|\mathring{\psi}_n\|^2\,,
\]
where we used \eqref{e:discreteeigen}, that $\cA_{G_N} \mathring{\psi}_n = w(\lambda_n)\mathring{\psi}_n$. But

\begin{multline} \label{e:psibad}
\langle K_{f,n} \rangle + \langle J_{f,n}\rangle + \frac{\langle M_{f,n}\rangle}{q+1} w(\lambda_n) = \frac{1}{N} \sum_{(x_0,x_1)\in B} \frac{1}{s^2(\lambda_n)} \int_0^L f_{(x_0,x_1)}(t) \\ \cdot \left[S^2_{\lambda_n}(L-t)+S^2_{\lambda_n}(t) + \frac{2S_{\lambda_n}(L-t)S_{\lambda_n}(t)}{q+1}w(\lambda_n)\right]\dd t \\
=\frac{\kappa_{\lambda_n}}{N}\sum_{(x_0,x_1)\in B} \int_0^Lf_{(x_0,x_1)}(t)\Psi_{\lambda_n}(t)\,\dd t = 2\,\kappa_{\lambda_n}\langle f\rangle_{\lambda_n}\,,
\end{multline}
where we used \eqref{eq:2}. Hence, \eqref{e:withav} is proved by comparing the expressions.

(iii) If $f$ is locally constant, $f_{(x_0,x_1)}(t) = c_{(x_0,x_1)}$, we get $\langle f\rangle_{\lambda_n} = \langle f\rangle$ by \eqref{eq:3}. Also,
\[
\langle \mathring{\psi}_n,(K_{f,n})_G\mathring{\psi}_n\rangle = \langle \mathring{\psi}_n,(J_{f,n})_G\mathring{\psi}_n\rangle = a_{n,L} \langle \mathring{\psi}_n, (K_f)_G\mathring{\psi}_n\rangle \,,
\]
\[
\langle \mathring{\psi}_n,(M_{f,n})_G\mathring{\psi}_n\rangle = b_{n,L}\langle \mathring{\psi}_n, (M_f)_G\mathring{\psi}_n\rangle \,
\]
\[
\langle K_{f,n}\rangle = \langle J_{f,n}\rangle = a_{n,L} \langle K_f\rangle \quad \text{and} \quad \langle M_{f,n}\rangle = b_{n,L}\langle M_f\rangle \,.
\]
where $a_{n,L} =\frac{1}{s^2(\lambda_n)}\int_0^LS^2_{\lambda_n}(t)\,\dd t$ and $b_{n,L} = \frac{2}{s^2(\lambda_n)}\int_0^LS_{\lambda_n}(L-t)S_{\lambda_n}(t)\,\dd t$.
Since $|a_{n,L}| \le C_IL$ and $|b_{n,L}| \le C'_IL$ for $\lambda_n\in \overline{I}\subset \sigma_{\mathrm{ac}}(H_{\mathbf{T}_q})$, the proof is complete.
\end{proof}

Note that \eqref{e:withav} implies the following bound on the quantum variance~:
\begin{multline}
  \label{eq:simple_bound}
  \frac{1}{N(I)} \sum_{\lambda_n^{(N)}\in I} \left|\langle \psi_n^{(N)},
    f\psi_n^{(N)}\rangle -\langle f\rangle_{\lambda_n^{(N)}} \right|^2   
   \leq \\
  \vari(K_{f,n}^\lambda - \langle K_{f,n}^\lambda \rangle S_0)
  +\vari(J_{f,n}^\lambda - \langle J_{f,n}^\lambda \rangle S_0)
  +\vari(M_{f,n}^\lambda - \langle M_{f,n}^\lambda \rangle S_1).
\end{multline}

\subsection{From the variance to the non-backtracking variance} \label{sec:discnonba}

We now express \eqref{eq:simple_bound} in terms of non-backtracking
variances, following \cite{A}. Denote $\cN_x = \{y:y\sim x\}$.

As in \cite[\S 3]{A}, we introduce the stochastic operators $\cS:\mathscr{H}_k\to\mathscr{H}_k$,
\[ 
\cS :=\frac{\cA_{G}}{q+1} \quad \text{if } k=0 \qquad \text{and}\qquad \cS := \frac{1}{q}\cB  \quad \text{for } k\ge 1\,.
\]
Here, $(\cB f)(x_0;x_k)=\sum_{x_{k+1}\in \cN_{x_k}\setminus\{x_{k-1}\}} f(x_1,x_{k+1})$. We then define
\[
\cS_T := \frac{1}{T}\sum_{m=0}^{T-1}(T-m)\cS^m \qquad \text{and} \qquad \widetilde{\cS}_T  := \frac{1}{T}\sum_{m=1}^T \cS^m \,.
\]

Note that 
\[
K^{\lambda_j} = (I-\cS)(\cS_TK^{\lambda_j}) + \widetilde{\cS}_T K^{\lambda_j}.
\]

Let us define $i_k :\mathscr{H}_k \to \mathscr{H}_{k+1}$ by $(i_kJ)(x_0;x_{k+1}) = J(x_1;x_{k+1})$, $\nabla^{\ast}:\mathscr{H}_1\to\mathscr{H}_0$ by $(\nabla^{\ast}K)(x_0)=\sum_{x_{-1}\sim x_0}K(x_{-1},x_0)-\sum_{x_1\sim x_0}K(x_0,x_1)$ and $\nabla^{\ast}:\mathscr{H}_{k+1}\to \mathscr{H}_k$  by $(\nabla^{\ast} K)(x_0;x_k)=\sum_{x_{-1}\in \cN_{x_0}\setminus \{x_1\}} K(x_{-1};x_k)-\sum_{x_{k+1}\in\cN_{x_k}\setminus \{x_{k-1}\}} K(x_0;x_{k+1})$ for $k\ge 1$. Then we have 
\[
\nabla^{\ast} i_0 K = (q+1)(I-\cS) K \quad \text{and} \quad \nabla^{\ast}i_k K = q(I-\cS)K \quad \text{for} \ k\ge 1\,.
\]
Hence, using the fact that $\vari(K_1+K_2)\leq 2 \vari(K_1)+2\vari(K_2)$, we obtain
\begin{equation}\label{eq:FirstVarBound}
\vari(K^\lambda - \langle K^\lambda \rangle S_k) \le 2q^{-2} \vari(\nabla^{\ast}i_k \cS_TK^\lambda) + 2\vari(\widetilde{\cS}_TK^\lambda - \langle K^\lambda\rangle S_k\rangle\,,
\end{equation}
where $S_k$ is as in \eqref{eq:DefSk}.

Recalling \eqref{e:kg}, \eqref{e:kb} and \eqref{e:fjfjtoile}, a simple calculation shows that
\[
2\ii \Im \mu_j^-\cdot \langle \psi_j, (\nabla^{\ast} K)_G \psi_j \rangle = \langle f_j^{\ast}, K_B f_j \rangle - \langle g_j^{\ast},K_B g_j\rangle \,,
\]
where $g_j,g_j^{\ast}$ are defined like $f_j,f_j^{\ast}$, except that $\mu_j^-$ are replaced by $\overline{\mu_j^-}$. See also \cite[Lemma 6.4]{A} and \cite[Lemma 7.8]{A} for a similar argument.
It follows that\footnote{Actually, instead of $\varnbi(J)$, we should have $\frac{1}{2} \varnbi(J) + \frac{1}{2}\widetilde{\varnbi}(J)$, where $\widetilde{\varnbi}$ is defined using $g_j,g_j^{\ast}$ instead. Since this variance is controlled exactly like $\varnbi(J)$, except for replacing $\mu_j^-$ by $\overline{\mu_j^-}$ in the arguments, we omitted it for transparency.}
\[
\vari(\nabla^{\ast}i_k\cS_TK^{\lambda}) \le \max_j\frac{1}{|\Im \mu_j^-|^2} \varnbi(i_k\cS_TK^\lambda) \,.
\]
Assuming $\overline{I}\subset \sigma_{\mathrm{ac}}(H_{\mathbf{T}_q})$ is in a fixed band, so that $\overline{w(I)} \subset (-2\sqrt{q},2\sqrt{q})$, this maximum is bounded by some $C_I$. Back to \eqref{eq:FirstVarBound}, note that for any $K$, we have
\[
\widetilde{\cS}_T \langle K \rangle S_k = \langle K\rangle S_k \,,
\]
since $\cS \mathbf{1} = \mathbf{1}$. We thus finally get 
\begin{equation}\label{e:suff}
\vari(K^\lambda - \langle K^\lambda \rangle S_k) \le 2q^{-2}C_I \varnbi(i_k\cS_TK^\lambda) + 2\vari\left(\widetilde{\cS}_T[K^\lambda - \langle K^\lambda \rangle S_k]\right).
\end{equation}

\subsubsection*{Taking advantage of the invariance}
Let us introduce the operators $\cR_{n,r}^{\lambda_j}:\mathscr{H}_k\to\mathscr{H}_{n+k}$ defined by
\[
(\cR_{n,r}^{\lambda_j}K^{\lambda_j})(x_0;x_{n+k}) = \overline{(\mu_j^-)^{n-r}}(\mu_j^-)^r K^{\lambda_j}(x_{n-r};x_{n-r+k}) \,.
\]
Then $\langle f,(\cR_{n,r}^{\lambda_j}K)_B g\rangle = \langle (\cB^{\ast}\mu_j^-)^{n-r}f, K_B (\cB\mu_j^-)^r g \rangle$. 
It follows from \eqref{e:nonback} that $\varnbi(K^{\lambda}) = \varnbi(\frac{1}{n}\sum_{r=1}^n\cR_{n,r}^{\lambda}K^{\lambda})$. Using \eqref{eq:simple_bound} and \eqref{e:suff}, we thus obtain
\begin{equation}\label{e:suff2}
\begin{aligned}
\frac{1}{N(I)}\sum_{\lambda_j\in I} &\left|\langle \psi_j,f\psi_j\rangle - \langle f\rangle_{\lambda_j}\right|^2 \\
&\le C_I \varnbi\Big(\frac{1}{n}\sum_{r=1}^n\cR_{n,r}^{\lambda} i_0 \cS_TK_f^{\lambda} \Big) + C_I \varnbi\Big(\frac{1}{n}\sum_{r=1}^n\cR_{n,r}^{\lambda} i_0 \cS_TJ_f^{\lambda} \Big)\\
&\quad+ C_I \varnbi\Big(\frac{1}{n}\sum_{r=1}^n\cR_{n,r}^{\lambda} i_1 \cS_TM_f^{\lambda} \Big) + 2\vari\left(\widetilde{\cS}_T[K^\lambda_f - \langle K^\lambda_f \rangle S_0]\right)\\
&\quad+ 2\vari\left(\widetilde{\cS}_T[J^\lambda_f - \langle J^\lambda_f \rangle S_0]\right) + 2\vari\left(\widetilde{\cS}_T[M^\lambda_f - \langle M^\lambda_f \rangle S_1]\right)  \,,
\end{aligned}
\end{equation}
where $K_f^{\lambda_j} := K_{f,j}$, $J_f^{\lambda_j} := J_{f,j}$ and $M_f^{\lambda_j} := M_{f,j}$.

\subsection{Controlling the variances}\label{sec:vartonorm}

We next try to bound the variances in terms of Hilbert-Schmidt norms. For clarity, we start with energy-independent observables $K$.

\begin{lem}\label{lem:HSfacile}
Suppose that $I$ is in a fixed band of AC spectrum. Then for any $K\in\mathscr{H}_k$,
\[
\vari(K) \le C_{L,I} \frac{N}{N(I)} \|K\|_{\mathscr{H}}^2 + c_{k,q,L,I} \frac{N}{N(I)}\frac{\#\{x:\rho_{G_N}(x)<k\}}{N}\|K\|_{\infty}^2\,.
\]
where $\|K\|_{\mathscr{H}}^2 = \frac{1}{N}\sum_{(x_0;x_k)\in B_k}|K(x_0;x_k)|^2$ and $\|K\|_{\infty} = \max_{(x_0;x_k)} |K(x_0;x_k)|$.
\end{lem}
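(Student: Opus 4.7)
My plan is to renormalize the discrete functions $\mathring{\psi}_n$ into an \emph{orthonormal} system in $\ell^2(V_N)$, apply the standard inequality that the sum of squared diagonal matrix elements is bounded by the squared Hilbert--Schmidt norm, and then estimate the latter in terms of $\|K\|_{\mathscr{H}}$ via a non-backtracking path count.

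The crucial step is to show that $\tilde{\psi}_n := \sqrt{\kappa_{\lambda_n}}\,\mathring{\psi}_n$, for $\lambda_n\in I$, is an orthonormal system in $\ell^2(V_N)$. If $\lambda_n\neq\lambda_m$, identity \eqref{e:discreteeigen} makes $\mathring{\psi}_n,\mathring{\psi}_m$ eigenvectors of $\cA_{G_N}$ for the distinct eigenvalues $w(\lambda_n),w(\lambda_m)$ (distinctness coming from injectivity of $w$ on the fixed band containing $\overline{I}$), so $\mathring{\psi}_n\perp\mathring{\psi}_m$. For $\lambda_n=\lambda_m=\lambda$, I would redo the computation leading to \eqref{e:norsi} with $\overline{\psi_n}\psi_m$ in place of $|\psi_n|^2$ in \eqref{eq:decompoSP} (with $f_b\equiv 1$), using again $\sum_b\overline{\psi_n(o_b)}\psi_m(t_b) = w(\lambda)\langle\mathring{\psi}_n,\mathring{\psi}_m\rangle$ from \eqref{e:discreteeigen}; this gives $\langle\psi_n,\psi_m\rangle_{L^2(\mathbf{G}_N)} = \kappa_\lambda\,\langle\mathring{\psi}_n,\mathring{\psi}_m\rangle_{\ell^2(V_N)}$, and orthonormality of $(\psi_n)$ then forces $\langle\mathring{\psi}_n,\mathring{\psi}_m\rangle = \delta_{nm}/\kappa_\lambda$.

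Since $\lambda\mapsto\kappa_\lambda$ is continuous and strictly positive on the compact set $\overline{I}$ (positivity is already contained in \eqref{e:normpsidis}), one has $\kappa_\lambda^{-1}\le C_{L,I}$ uniformly on $\overline{I}$, so $|\langle\mathring{\psi}_n, K_G\mathring{\psi}_n\rangle|^2 \le C_{L,I}|\langle\tilde{\psi}_n, K_G\tilde{\psi}_n\rangle|^2$. Completing $\{\tilde{\psi}_n\}_{\lambda_n\in I}$ to an orthonormal basis $\{\phi_m\}$ of $\ell^2(V_N)$ and using $\sum_m|\langle\phi_m, K_G\phi_m\rangle|^2\le \sum_{x_0,y}|(K_G)_{x_0,y}|^2$ reduces matters to estimating $\sum_{x_0,y}|(K_G)_{x_0,y}|^2$, where $(K_G)_{x_0,y} = \sum_{(x_0;x_k)\in B_k,\,x_k=y} K(x_0;x_k)$.

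For vertices $x_0$ with $\rho_{G_N}(x_0)\ge k$, the ball of radius $k$ around $x_0$ is a tree, so at most one non-backtracking path of length $k$ joins $x_0$ to any given $y$; hence $\sum_y|(K_G)_{x_0,y}|^2 = \sum_{(x_0;x_k):\,x_0\text{ fixed}}|K(x_0;x_k)|^2$, and summing bounds this contribution by $\sum_{(x_0;x_k)\in B_k}|K|^2 = N\|K\|_{\mathscr{H}}^2$. For the remaining vertices $x_0$ with $\rho_{G_N}(x_0)<k$, Cauchy--Schwarz combined with the fact that at most $(q+1)q^{k-1}$ non-backtracking paths of length $k$ issue from any vertex yields $\sum_y|(K_G)_{x_0,y}|^2\le [(q+1)q^{k-1}]^2\|K\|_\infty^2$. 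Summing these two contributions and rewriting $1/N(I)=(N/N(I))(1/N)$ produces exactly the stated bound. The main obstacle is the orthogonality claim of the second paragraph: the cross $L^2$-inner product $\langle\psi_n,\psi_m\rangle$ must reduce to $\kappa_\lambda\langle\mathring{\psi}_n,\mathring{\psi}_m\rangle$ with the \emph{same} proportionality constant $\kappa_\lambda$ as in the diagonal case $n=m$, and without this precise identity the orthogonality of the $\mathring{\psi}_n$ within a possibly degenerate $\cA_{G_N}$-eigenspace---and hence the clean reduction to the Hilbert--Schmidt norm---would fail.
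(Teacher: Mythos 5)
Your proposal is correct and follows essentially the same route as the paper: orthogonality of the $\mathring{\psi}_n$ via injectivity of $w$ on the band plus the polarized version of \eqref{e:norsi} in degenerate eigenspaces, normalization using $\|\mathring{\psi}_n\|^2=1/\kappa_{\lambda_n}$ and \eqref{e:normpsidis}, completion to an orthonormal basis, and reduction to the Hilbert--Schmidt norm of $K_G$. The only difference is that you prove the final path-counting estimate (tree-like balls give no cross terms; bad vertices handled by Cauchy--Schwarz) directly, whereas the paper cites \cite[Proposition 1]{A} for exactly this bound.
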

\begin{proof}
As we showed in Lemma \ref{lem:nonback}, if $\lambda$ is an eigenvalue of the quantum graph with eigenvector $\psi$, then $w(\lambda)$ is an eigenvalue of $\cA_{G_N}$ with eigenvector $\mathring{\psi}$. By assumption, if $\lambda_n,\lambda_j\in I$ and $\lambda_n\neq \lambda_j$, then $w(\lambda_n) \neq w(\lambda_j)$, so the eigenvectors $\mathring{\psi}_n$ and $\mathring{\psi}_j$ are orthogonal. If $\lambda_n = \lambda_j=\lambda$ and $\psi_n\perp \psi_j$, we find as in \eqref{e:norsi} that $\langle \mathring{\psi}_n, \mathring{\psi}_j\rangle \kappa_{\lambda} = 0$, so $\mathring{\psi}_n$ and $\mathring{\psi}_j$ are again orthogonal. If $\phi_n := \mathring{\psi}_n/\|\mathring{\psi}_n\|$, we may thus complete $(\phi_n)_{\lambda_n\in I}$ to an orthonormal basis $(\phi_n)_{n=1}^N$ of $\ell^2(G_N)$. Hence, recalling \eqref{e:vari} and using (\ref{e:normpsidis}), we get
\begin{multline*}
\vari(K) \le \frac{C_{L,I}}{N(I)} \sum_{\lambda_n\in I} \| K_G\mathring{\psi}_n\|^2 \le \frac{C_{L,I}^2}{N(I)}\sum_{\lambda_n\in I} \|K_G\phi_n\|^2 \\
\le \frac{C_{L,I}^2}{N(I)}\sum_{n=1}^N\|K_G\phi_n\|^2 = C_{L,I}^2\frac{N}{N(I)}\|K_G\|_{HSN}^2\,,
\end{multline*}
where $\|K_G\|_{HSN}^2 = \frac{1}{N}\sum_{x,y\in V_N} |K_G(x,y)|^2$ is a normalized HS norm. 
We may then apply \cite[Proposition 1]{A}, which tells us that 
\[
\|K_G\|_{HSN}^2\leq  \|K\|_{\mathscr{H}}^2 + c_{k,q,L,I} \frac{\#\{x:\rho_{G_N}(x)<k\}}{N}\|K\|_{\infty}^2
\]
for some $c_{k,q,L,I}>0$. This concludes the proof.
\end{proof}

\begin{rem}\label{rem:Weyl}
The fraction $\frac{N}{N(I)}$ arising in Lemma~\ref{lem:HSfacile} is asymptotically bounded. In fact, as $w$ is injective on $I$, we have $N(I) = \#\{w(\lambda_j) \in w(I)\}$. Moreover, $w(\lambda_j)$ are all eigenvalues of $\cA_{G_N}$. Conversely, if $\mu_j\in w(I)$, $\mu_j=w(t_j)$ is an eigenvalue of $\cA_{G_N}$, we may construct an eigenfunction of $H_{\mathbf{G}_N}$ with eigenvalue $t_j$ via formula \eqref{e:psinegal}. Hence, $N(I)$ is the number of eigenvalues of $\cA_{G_N}$ in $w(I)$, so $\frac{N(I)}{N} \to \mu_o(w(I)):=\langle \delta_o,\chi_{w(I)}(\cA_{\T_q})\delta_o\rangle$ by the law of Kesten-McKay \cite{Ke59,Mc81}. Since $w(I)\subset (-2\sqrt{q},2\sqrt{q})$, we have $\mu_o(w(I)) \ge C_I>0$. Hence\footnote{Note that this argument continues to hold on any $I$ on which $w$ is injective and $I\cap \sigma_2=\emptyset$. If moreover $I\cap \sigma_1=\emptyset$, we see that $\frac{N(I)}{N} \to \mu_o(w(I)) = 0$ because $w(I)\cap (-2\sqrt{q},2\sqrt{q})=\emptyset$.}, $\limsup_{N\to\infty} \frac{N}{N(I)} \le C_I^{-1}$.
\end{rem}

From here, one may adapt the ``ultra-short'' proof of \cite[\S 4]{A}, see also \cite[\S 2]{AS4}, to obtain a simple proof of Theorem~\ref{cor:equireg}. Indeed, in this case, the observables are independent of the energy (cf. Lemma~\ref{lem:reduction}.(iii)). Since our main concern is Theorem~\ref{thm:equireg}, we consider the general case directly. As the observables now depend on the energy (Lemma~\ref{lem:reduction}.(ii)), we use the non-backtracking quantum variance and give the following analogue of Lemma~\ref{lem:HSfacile}.

\begin{lem}\label{lem:HSgen}
Let $K^\lambda\in \mathscr{H}_k$, $k\ge 1$, depend on $\lambda$ as in \eqref{e:suff2}, and let $I$ be in a fixed band of the AC spectrum. Then
\begin{equation}\label{eq:BoundVarNB}
\limsup_{N\to\infty} \varnbi(K^\lambda) \le C(L,I) \limsup_{N\to\infty} \int_I \|K^{\lambda}\|_{\mathscr{H}}^2\,\dd\lambda \,,
\end{equation}
where $\|K^{\lambda}\|_{\mathscr{H}}^2 = \frac{1}{N}\sum_{(x_0;x_k)}|K^{\lambda}(x_0;x_k)|^2$. A similar bound holds for $\vari(K^{\lambda})$ if $k=0,1$.
\end{lem}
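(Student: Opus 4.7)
The plan is to control $\varnbi(K^\lambda)$ by a three-step reduction: strip off one of the two non-backtracking eigenfunctions via Cauchy--Schwarz, freeze the energy-dependence of $K^\lambda$ on a fine partition of $I$, and finally invoke a Hilbert--Schmidt estimate in the spirit of Lemma~\ref{lem:HSfacile}.

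First I would write $|\langle f_j^*, K_B^{\lambda_j} f_j\rangle|^2 \le \|f_j^*\|^2\,\|K_B^{\lambda_j} f_j\|^2$ and use \eqref{e:fjfjtoile}, \eqref{e:normpsidis} together with $|\mu_j^-|^2 = 1/q$ on a fixed band to obtain $\|f_j^*\|_{\ell^2(B)}^2 \le C_{L,I}$ uniformly in $\lambda_j\in\overline{I}$, which reduces the task to bounding $\frac{1}{N(I)}\sum_{\lambda_j\in I}\|K_B^{\lambda_j} f_j\|^2$. Then I would partition $I = \bigsqcup_m I_m$ with $|I_m|\le \eps$ and fix $\lambda_m^*\in I_m$. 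Inspecting \eqref{e:suff2}, the dependence $\lambda\mapsto K^\lambda$ is uniformly continuous on $\overline{I}$, being built out of the smooth functions $s(\lambda)$, $c(\lambda)$, $S_\lambda$, $\mu^\pm(\lambda)$ together with the fixed discrete operators $\cR^\lambda_{n,r}$, $\cS_T$ and $i_k$. Consequently one can replace $K^{\lambda_j}_B f_j$ by $K^{\lambda_m^*}_B f_j$ on each $I_m$ up to an error $\omega(\eps)^2 C_{L,I} N(I_m)$ that vanishes as $\eps\to 0$.

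For the third step, I want to establish, for each frozen slice,
\[
\sum_{\lambda_j\in I_m}\|K_B^{\lambda_m^*} f_j\|^2
\le C_{L,I}\,N(I_m)\,\|K^{\lambda_m^*}\|_{\mathscr{H}}^2
+ c_{k,q,L,I}\cdot\#\{x\in V_N:\rho_{G_N}(x)<k+T\}\cdot\|K^{\lambda_m^*}\|_{\infty}^2,
\]
in parallel with \cite[Proposition~1]{A} and \cite[\S 4]{AS2}. Summing over $m$, using $N(I_m)/N \to \mu_o(w(I_m))$ from Remark~\ref{rem:Weyl}, and letting first $N\to\infty$ and then $\eps\to 0$ turns the right-hand side into the Riemann sum associated with $\int_I \|K^\lambda\|^2_{\mathscr{H}}\,\dd\lambda$, giving \eqref{eq:BoundVarNB}. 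The combinatorial error disappears under \textbf{(BST)}. The parallel argument with $f_j$ replaced by $\mathring\psi_j$ and $\cB$ by $\cA_G$ handles $\vari(K^\lambda)$ for $k=0,1$, which is essentially a direct reiteration of Lemma~\ref{lem:HSfacile}.

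The principal obstacle is the display above. Unlike the orthogonal family $(\mathring\psi_n/\|\mathring\psi_n\|)$ exploited in Lemma~\ref{lem:HSfacile}, the non-backtracking eigenvectors $(f_j)$ of the non-normal operator $\cB$ are not orthogonal in $\ell^2(B)$, so one cannot just complete them into an orthonormal basis and conclude. What saves the day is that, by \textbf{(BST)} together with the uniform spectral gap provided by \textbf{(EXP)} through $\frac{1}{q}\cB$ \cite{ALM, A}, the Gram matrix of $(f_j)_{\lambda_j\in I_m}$ is near-diagonal, with diagonal entries governed by \eqref{e:normpsidis}. Controlling this Gram matrix and reducing to a Hilbert--Schmidt bound on $B_k$ is the technical heart of the argument and closely follows the scheme developed in \cite[\S 4]{AS2}.
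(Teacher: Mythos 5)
Your first two steps (Cauchy--Schwarz against $\|f_j^{\ast}\|^2\le C_{L,I}$, which follows from \eqref{e:normpsidis} and $\langle \tau_-\mathring{\psi}_j,\tau_+\mathring{\psi}_j\rangle = w(\lambda_j)\|\mathring{\psi}_j\|^2$) are sound and match the opening of the paper's proof. The gap is in the third step, which you yourself flag as the ``technical heart'': the frozen-slice estimate
$\sum_{\lambda_j\in I_m}\|K_B^{\lambda_m^*}f_j\|^2 \le C\,N(I_m)\,\|K^{\lambda_m^*}\|_{\mathscr{H}}^2+\dots$
is not justified by the Gram-matrix remark you offer. In fact the Gram matrix of $(f_j)$ is \emph{exactly} diagonal on a fixed band (since the $\mathring{\psi}_j$ are pairwise orthogonal and $\cA_{G}\mathring{\psi}_j=w(\lambda_j)\mathring{\psi}_j$, all cross terms $\langle\tau_\pm\mathring{\psi}_i,\tau_\pm\mathring{\psi}_j\rangle$ vanish for $i\ne j$), so near-diagonality is not the obstruction. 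The obstruction is that orthogonality only gives the Bessel bound $\sum_{\lambda_j\in I_m}\|K_B\hat f_j\|^2\le\|K_B\|_{HS}^2\approx N\|K^{\lambda_m^*}\|_{\mathscr{H}}^2$, i.e.\ the \emph{full} Hilbert--Schmidt norm, with $N$ in place of $N(I_m)$. That is what \cite[Proposition 1]{A} and Lemma~\ref{lem:HSfacile} deliver, and it is all they can deliver, since they sum over a complete orthonormal system. With the factor $N$ on every slice, summing over the $\sim|I|/\eps$ slices and dividing by $N(I)$ produces a bound that blows up as $\eps\downarrow0$. To replace $N$ by $N(I_m)$ you must show that the Hilbert--Schmidt mass of $K_B$ equidistributes over spectral windows of $\cA_{G_N}$, i.e.\ a bound of the form $\tr\bigl(K_B^{\ast}K_B P_{I_m}\bigr)\lesssim \frac{N(I_m)}{N}\tr\bigl(K_B^{\ast}K_B\bigr)$ for the relevant spectral projections; this is a local-law type statement that does not follow from \textbf{(EXP)}, \textbf{(BST)} and orthogonality alone, and it is precisely what the missing work consists of.

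For comparison, the paper does not partition $I$ at all. It writes the energy dependence in the factorized form $J^{m}(x_0;x_k)=F(m)K(x_0;x_k)$ (and a finite sum of such terms for the actual observables of \eqref{e:suff2}), expands $f_j=\tau_+\mathring{\psi}_j-\mu_j^-\tau_-\mathring{\psi}_j$ in the orthonormal basis $\phi_j$ of $\ell^2(V_N)$, and converts the sum over $j$ into matrix entries of $(\chi|F|^2)(\cA_{G_N})$, $(\nu\chi|F|^2)(\cA_{G_N})$, etc. These are then transferred to the tree by polynomial approximation plus \textbf{(BST)}, evaluated via $f(\cA_{\T_q})(v,w)=\frac1\pi\int f(m)\Im G^m(v,w)\,\dd m$, and --- crucially --- the off-diagonal path contributions cancel \emph{exactly} by the spherical-function recursion \eqref{eq:RecursSpherical} (equation \eqref{e:rec2}). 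That exact cancellation is also what makes $C(L,I)$ independent of $k$, which is essential because the lemma is applied with $k$ replaced by $n+k$, $n\to\infty$; any route that estimates the off-diagonal terms by Cauchy--Schwarz over the $\sim q^{k}$ paths would lose this. Your proposal neither supplies the equidistribution input nor addresses the $k$-uniformity, so as written it does not prove the lemma.
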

It is important that $C(L,I)$ is independent of $k$. In fact, this lemma will be applied to $K^{\lambda}\in \mathscr{H}_{n+k}$, with $n\to \infty$.
\begin{proof}
As in Lemma~\ref{lem:HSfacile}, if $\phi_j := \mathring{\psi}_j/\|\mathring{\psi}_j\|$, we may complete $(\phi_j)_{w(\lambda_j)\in w(I)}$ to an orthonormal basis $(\phi_j)_{j=1}^N$ of $\ell^2(G_N)$. Denote $\widetilde{f}_j = f_j/\|\mathring{\psi}_j\|$. We then have for $m_j=w(\lambda_j)$,
\[
\varnbi(K^{\lambda}) \le \frac{C_{L,I,q}}{N(I)} \sum_{m_j\in w(I)} \|K_B^{w^{-1}(m_j)} \widetilde{f}_j\|^2 \,.
\]

As previously mentioned, $m_j=w(\lambda_j)$ are eigenvalues of $\cA_{G_N}$ and we denote the rest of the eigenvalues corresponding to the completed base $(\phi_j)_{j=1}^N$ also by $\{m_j\}_{j=1}^N$. 

Since $w$ is injective and continuous on $I$, it is either strictly increasing or strictly decreasing. Assuming the former without loss of generality, we now define $J^{m_j} = K^{w^{-1}(m_j)}$ if $m_j\in w(I)$, $J^{m_j} = K^b$ if $m_j\ge w(b)$ and $J^{m_j} = K^a$ if $m_j \le w(a)$, where $[a,b] := \overline{I}$. 

Let $\chi$ be a continuous function, with $0\le \chi \le 1$, $\chi=1$ on $w(I)$, $\chi = 0$ outside a $\delta$-enlargement $w(I)_{\delta}$ of $w(I)$. We have
\begin{multline*}
\varnbi(K^{\lambda}) \le \frac{C_{L,I,q}}{N(I)} \sum_{m_j\in w(I)} \|J_B^{m_j} \widetilde{f}_j\|^2 \le  \frac{C_{L,I,q}}{N(I)} \sum_{j=1}^N \chi(m_j) \|J_B^{m_j} \widetilde{f}_j\|^2 \\
= \frac{C}{N(I)} \sum_{j=1}^N \chi(m_j) \sum_{(x_0,x_1)\in B}\sum_{(x_2;x_k),(y_2;y_k)} J^{m_j}(x_0;x_k)\overline{J^{m_j}(x_0;y_k)} \widetilde{f}_j(x_{k-1},x_k)\overline{\widetilde{f}_j(y_{k-1},y_k)}\,,
\end{multline*}
where the last sum runs over paths $(x_2;x_k)$, $(y_2;y_k)$ with $x_2,y_2\in \cN_{x_1}\setminus \{x_0\}$, and $(x_0;y_k) := (x_0,x_1,y_2,\dots,y_k)$. To keep the notations clear, we now assume that
\begin{equation}\label{e:kernelass}
J^{m}(x_0;x_k) = F(m) K(x_0;x_k)
\end{equation}
for some continuous function $F$ on $\R$, $|F(m)| \le c_I$. We comment on the case of the operators in \eqref{e:suff2} at the end of the proof.

Recalling \eqref{e:fjfjtoile} and the formula $f(\cA_G)(x,y) = \sum_{j=1}^N f(m_j) \phi_j(x)\overline{\phi_j(y)}$, we get
\begin{equation*}
\begin{aligned}
\varnbi(K^{\lambda}) &\leq \frac{C}{N(I)} \sum_{j=1}^N |F(m_j)|^2\chi(m_j) \sum_{(x_0,x_1)\in B}\sum_{(x_2;x_k),(y_2;y_k)} K(x_0;x_k)\overline{K(x_0;y_k)}\\
&\quad\times \Big( \big(\phi_j(x_k) - \nu(m_j)\phi_j(x_{k-1}) \big) \overline{\big( \phi_j(y_k) - \nu(m_j)\phi_j(y_{k-1})\big)}  \Big)
\\
&= \frac{C}{N(I)} \sum_{(x_0,x_1)\in B} \sum_{(x_2;x_k),(y_2;y_k)} K(x_0;x_k)\overline{K(x_0;y_k)}\big[(\chi |F|^2)(\cA_G)(x_k,y_k) \\
&\quad- (\overline{\nu}\chi |F|^2)(\cA_G)(x_k,y_{k-1}) - (\nu\chi |F|^2)(\cA_G)(x_{k-1},y_k) \\
&\quad+ \frac{1}{q} (\chi |F|^2)(\cA_G)(x_{k-1},y_{k-1})\big]\,,
\end{aligned}
\end{equation*}
where $\nu(m) = \mu^-(w^{-1}(m))$, and we used the fact that $|\mu^-(\lambda)|^2 = \frac{1}{q}$ for $\lambda\in I_{\delta}$.

As in \cite[Proposition 3]{A}, see also \cite[Theorem 4.1]{AS2}, we approximate each continuous function $\chi(m) |F(m)|^2,$ $\nu(m)\chi(m)|F(m)|^2$, $\overline{\nu}(m)\chi(m)|F(m)|^2$ by polynomials $Q_1,Q_2,Q_3$ up to an $\eps$ error. Let $d_{\eps} = \max(\deg(Q_i))$. 

Recall that $\rho_{G_N}(x)$ is the largest $\rho$ such that $B_{G_N}(x,\rho)$ has no closed cycles.
If $\rho_G(x_0) \ge d_{\eps}+k=:d_{\eps,k}$, the balls $B_{G_N}(x_0,d_{\eps,k})$ and $B_{\T_q}(\tilde{x}_0,d_{\eps,k})$ are thus isomorphic, so we get $Q_i(\cA_G)(x,y) = Q_i(\cA_{\T_q})(\tilde{x},\tilde{y})$ for all $x,y\in B_{G_N}(x_0,k)$, where $\tilde{x},\tilde{y}\in B_{\T_q}(\tilde{x}_0,k)$ are lifts of $x,y$. The ``bad'' $x_0$ with $\rho_G(x_0)< d_{\eps,k}$ induce an error term. Finally, we replace the polynomials $Q_i$ by the original functions, yielding an $\eps$ error. In the end, we get
\begin{multline*}
\varnbi(K^{\lambda}) \le \frac{C^2}{N(I)} \sum_{\rho_G(x_0) \ge d_{\eps,k}}\sum_{x_1\sim x_0} \sum_{(x_2;x_k),(y_2;y_k)} K(x_0;x_k)\overline{K(x_0;y_k)}\\
\big[(\chi |F|^2)(\cA_{\T_q})(\tilde{x}_k,\tilde{y}_k) - (\overline{\nu}\chi |F|^2)(\cA_{\T_q})(\tilde{x}_k,\tilde{y}_{k-1}) - (\nu\chi |F|^2)(\cA_{\T_q})(\tilde{x}_{k-1},\tilde{y}_k) \\
+ \frac{1}{q} (\chi|F|^2)(\cA_{\T_q})(\tilde{x}_{k-1},\tilde{y}_{k-1})\big]+ \frac{C' N(q+1)q^{2k}}{N(I)} \eps \, \|K\|_{\infty}^2  \\
+ C'\frac{\# \{\rho_G(x)<d_{\eps,k}\}}{N(I)}(q+1)q^{2k}\|K\|_{\infty}^2 \,
\end{multline*}
for some $C'>0$.

Since $\cA_{\T_q}$ has purely absolutely continuous spectrum, it follows that $f(\cA_{\T_q})(v,w) = \frac{1}{\pi}\int f(m) \Im G^m(v,w)\,\dd m$, where $G^m(v,w) = \lim_{\eta\downarrow 0}(\cA_{\T_q} - (m+\ii\eta))^{-1}(v,w)$ (see for instance \cite[Lemma 3.6]{AS}). Hence,
\begin{align*}
\varnbi(K^{\lambda}) &\leq \frac{c}{N(I)} \sum_{\rho_G(x_0) \ge d_{\eps,k}}\sum_{x_1\sim x_0} \sum_{(x_2;x_k),(y_2;y_k)}\Big{(} K(x_0;x_k)\overline{K(x_0;y_k)}\\
 &\quad\int_{w(I)_{\delta}} \chi(m)|F(m)|^2 \Big[\Im G^m(\tilde{x}_k,\tilde{y}_k) - \overline{\mu^-(w^{-1}m)}\Im G^m(\tilde{x}_k;\tilde{y}_{k-1})  \\
&\quad- \mu^-(w^{-1}m)\Im G^m (\tilde{x}_{k-1},\tilde{y}_k) + \frac{1}{q}\Im G^m(\tilde{x}_{k-1},\tilde{y}_{k-1})\Big]\,\dd m\Big{)}\\
&\quad+ \frac{C' N(q+1)q^{2k}}{N(I)} \eps \, \|K\|_{\infty}^2  + C'\frac{\# \{\rho_G(x)<d_{\eps,k}\}}{N(I)}(q+1)q^{2k}\|K\|_{\infty}^2.
\end{align*}

If $d(v,w)=s$, then $\Im G^m(v,w) = c_m \Phi_m(s)$, where $\Phi_m$ is the spherical function defined in (\ref{eq:DefSpherical}) and $c_m = \Im G^m(o,o)$. Let $e_k=(x_{k-1},x_k)$, $e_k'=(y_{k-1},y_k)$ and suppose $e_k\neq e_k'$. Then there is a path $(v_0,\dots,v_s)$ with $v_0=\tilde{x}_k$, $v_1=\tilde{x}_{k-1}$, $v_{s-1}=\tilde{y}_{k-1}$, $v_s=\tilde{y}_k$. So the term in square brackets takes the form
\begin{equation}\label{e:rec2}
c_m\left(\Phi_m(s) - \left[\overline{\mu^-(w^{-1}m)}+\mu^-(w^{-1}m)\right]\Phi_m(s-1) + \frac{1}{q}\Phi_m(s-2)\right) =0\,,
\end{equation}
where we used the fact that $q(\overline{\mu^-(t)}+\mu^-(t)) = w(t)$, as well as (\ref{eq:RecursSpherical}). Since $\rho_G(x_0)\ge d_{\eps,k}$, $e_k\neq e_k'$ iff $(y_2;y_k)\neq (x_2;x_k)$. We thus see that the sum vanishes if $(y_2;y_k)\neq (x_2;x_k)$, so only the diagonal terms may be nonzero. Recalling \eqref{e:inisphe}, we get
\begin{equation}\label{eq:BoundNBVar}
\begin{aligned}
&\varnbi(K^{\lambda}) \leq \frac{c}{N(I)} \sum_{\rho_G(x_0) \ge d_{\eps,k}} \sum_{(x_1;x_k)}\\
 &\int_{w(I)_{\delta}} \Big( |F(m)|^2|K(x_0;x_k)|^2\cdot \chi(m)c_m \Big[\frac{q+1}{q} -\frac{m^2}{q(q+1)}  \Big]\,\dd m\Big)\\
&+ \frac{C' N(q+1)q^{2k}}{N(I)} \eps \, \|K\|_{\infty}^2  + C'\frac{\# \{\rho_G(x)<d_{\eps,k}\}}{N(I)}(q+1)q^{2k}\|K\|_{\infty}^2 \,.
\end{aligned}
\end{equation}
Recall that $F(m)K(x_0;x_k) = J^m(x_0;x_k) = K^{w^{-1}(m)}(x_0;x_k)$. We finally change the variables $m = w(\lambda)$. Note that $|w'(\lambda)|\le C$ (in fact, $\lambda \mapsto w(\lambda)$ is analytic \cite[p.\ 10]{PT87}) and $c_m\le c_I$. Using Remark~\ref{rem:Weyl}, we see that the first term of (\ref{eq:BoundNBVar}) is asymptotically bounded by
\[  C(L,I) \limsup_{N\to\infty} \int_{I_{\delta'}} \|K^{\lambda}\|_{\mathscr{H}}^2\,\dd\lambda\]
where $\delta'$ goes to zero with $\delta$. The last term in (\ref{eq:BoundNBVar}) goes to zero as $N\rightarrow \infty$ thanks to \textbf{(BST)}. Therefore,  the proof is now complete for $\varnbi(K^{\lambda})$ by first taking $N\to\infty$, then $\eps\downarrow 0$ and $\delta \downarrow 0$.

The control of $\vari(K^{\lambda})$ for $k=0$ is similar. For $k=1$, we find that
\[
\vari(K^{\lambda}) \leq \frac{C}{N(I)} \int_{w(I)} \chi(m)\sum_{x_0\in V} \sum_{x_1,y_1\in\cN_{x_0}} J^{m}(x_0,x_1)\overline{J^{m}(x_0,y_1)}\Im G^m(\tilde{x}_1,\tilde{y}_1)\,\dd m\,.
\]
We write the sums $\sum_{x_0\in V}\sum_{x_1,y_1\in\cN_{x_0}}$ in the form
\begin{multline*}
\sum_{(x_0,x_1)} |J^{m}(x_0,x_1)|^2\Im G^m(\tilde{x}_1,\tilde{x}_1) + \sum_{(x_{-1},x_0,x_1)} J^{m}(x_0,x_1)\overline{J^{m}(x_0,x_{-1})}\Im G^m(\tilde{x}_1,\tilde{x}_{-1})\,.
\end{multline*}
The first term has the required form, since $\Im G^m(v,v)$ is bounded. For the second term, we have $\Im G^m(\tilde{x}_1,\tilde{x}_{-1}) = c_m \Phi_m(2)$. Using the Cauchy-Schwarz inequality, we have $|\sum_{(x_{-1},x_0,x_1)} J^{m}(x_0,x_1)\overline{J^{m}(x_0,x_{-1})} | \le q \sum_{(x_0,x_1)} |J^{m}(x_0,x_1)|^2$. This completes the proof.

Finally, in the previous proof we assumed \eqref{e:kernelass} that $J^m(x_0;x_k) = F(m)K(x_0;x_k)$. In \eqref{e:suff2}, for $k=1,2$, we have operators of the form
\[
J^m(x_0;x_{n+k}) = \frac{1}{n}\sum_{r=1}^n\sum_{s=0}^{T-1}\sum_{y_1,\dots,y_{s+1}\in B(x_{n-r+1},s+1)} \int_0^L F_{T,t,r,s}(m) f_{(y_s,y_{s+1})}(t)\,\dd t \,,
\]
where $F_{T,t,r,s}$ is continuous and uniformly bounded $|F_{T,t,r,s}(m)| \le C_I$. Looking back at the proof, we see that all arguments continue to apply; the notations simply become more cumbersome. Here, instead of $|F(m)|^2$, we have $F_{T,t,r,s}(m) \overline{F_{T',t',r',s'}(m)}$, which we approximate by a polynomial as before. 
\end{proof}

\subsection{End of the proof}\label{sec:end_proof}
We may now conclude the proof of Theorem~\ref{thm:equireg}. 
From (\ref{e:suff2}) and Lemma \ref{lem:HSgen}, we have
\begin{equation}\label{e:suff3}
\begin{aligned}
&\limsup_{N\to\infty} \frac{1}{N(I)}\sum_{\lambda_j\in I} \left|\langle \psi_j,f\psi_j\rangle - \langle f\rangle_{\lambda_j}\right|^2 \\
&\le C'(L,I) \limsup_{N\to\infty} \int_I \bigg[\left\|\frac{1}{n}\sum_{r=1}^n\cR_{n,r}^{\lambda} i_0 \cS_TK_f^{\lambda} \right\|_{\mathscr{H}}^2 + \left\|\frac{1}{n}\sum_{r=1}^n\cR_{n,r}^{\lambda} i_0 \cS_TJ_f^{\lambda}\right\|_{\mathscr{H}}^2\\
&\quad+ \left\|\frac{1}{n}\sum_{r=1}^n\cR_{n,r}^{\lambda} i_1 \cS_TM_f^{\lambda} \right\|_{\mathscr{H}}^2 + \left\|\widetilde{\cS}_T[K^\lambda_f - \langle K^\lambda_f \rangle S_0]\right\|_{\mathscr{H}}^2\\
&\quad+ \left\|\widetilde{\cS}_T[J^\lambda_f - \langle J^\lambda_f \rangle S_0]\right\|_{\mathscr{H}}^2 +\left\|\widetilde{\cS}_T[ M^\lambda_f - \langle M^\lambda_f \rangle S_1]\right\|_{\mathscr{H}}^2\bigg] \dd\lambda  \,,
\end{aligned}
\end{equation}

The first three terms may be controlled in exactly the same way as in \cite[p.\ 661--662]{A}, and give vanishing contributions as $N\rightarrow \infty$, followed by $n\to\infty$, for any $T$.

The remaining terms are of the form $\|\widetilde{\cS}_T J^{\lambda}\|_{\mathscr{H}}$, for $J^{\lambda} =K^{\lambda}-\langle K^{\lambda}\rangle S_k$. By \cite[Remark 2.2]{A}, we know that 
$\|\widetilde{\cS}_TJ^{\lambda}\|_{\mathscr{H}_k} \le \frac{c_{k,\beta}}{T} \|J^{\lambda}\|_{\mathscr{H}_k}$. The result follows by letting $T\rightarrow \infty$.

\section{The case of integral operators}\label{sec:integralop}

The aim of this Section is to prove Theorem~\ref{thm:integral}. We follow the same strategy discussed in Section~\ref{sec:equiregproof}. The main task will be to reduce the theorem to a control of discrete variances of observables in $\mathscr{H}_m^o$, see \eqref{eq:new_bound}. Afterwards, the proof will follow as before.

We start by expanding the scalar product \eqref{e:uneq}. Denote $b_1=(x_0,x_1)$ and $b_k=(x_{k-1},x_k)$. Using \eqref{e:psinegal} and the notation $S_n(x):=S_{\lambda_n}(x)$, we have
\begin{multline*}
2\left\langle \psi_n, K_k\psi_n\right\rangle \\= \sum_{(x_0;x_k)\in B_k} \overline{\psi_n(x_0)}\psi_n(x_{k-1})\int_0^L \int_0^L K_{b_1,b_k}(r_{b_1},s_{b_k})\frac{S_n(L-r_{b_1})S_n(L-s_{b_k})}{s^2(\lambda_n)}\,\dd r_{b_1}\,\dd s_{b_k}\\
+ \sum_{(x_0;x_k)\in B_k} \overline{\psi_n(x_1)}\psi_n(x_k)\int_0^L \int_0^L K_{b_1,b_k}(r_{b_1},s_{b_k})\frac{S_n(r_{b_1})S_n(s_{b_k})}{s^2(\lambda_n)}\,\dd r_{b_1}\,\dd s_{b_k}\\
+ \sum_{(x_0;x_k)\in B_k} \overline{\psi_n(x_0)}\psi_n(x_k)\int_0^L \int_0^L K_{b_1,b_k}(r_{b_1},s_{b_k})\frac{S_n(L-r_{b_1})S_n(s_{b_k})}{s^2(\lambda_n)}\,\dd r_{b_1}\,\dd s_{b_k}\\
+ \sum_{(x_0;x_k)\in B_k} \overline{\psi_n(x_1)}\psi_n(x_{k-1})\int_0^L \int_0^L K_{b_1,b_k}(r_{b_1},s_{b_k})\frac{S_n(r_{b_1})S_n(L-s_{b_k})}{s^2(\lambda_n)}\,\dd r_{b_1}\,\dd s_{b_k}\,.
\end{multline*}
Assume $k\ge 2$ (the case $k=1$ is similar to Theorem~\ref{thm:equireg}). If $J_{K,n} \in \mathscr{H}_k$, $M_{K,n}\in \mathscr{H}_{k-1}$ and $P_{K,n}\in \mathscr{H}_{k-2}$ are defined by
\[
J_{K,n}(x_0;x_k) = \int_0^L\int_0^L K_{(x_0;x_k)}(r_{b_1},s_{b_k})\frac{S_n(L-r_{b_1})S_n(s_{b_k})}{s^2(\lambda_n)}\,\dd r_{b_1}\,\dd s_{b_k}\,,
\]
\begin{multline*}
M_{K,n}(x_0;x_{k-1}) \\= \sum_{x_k\in\cN_{x_{k-1}}\setminus\{x_{k-2}\}} \int_0^L\int_0^L K_{(x_0;x_k)}(r_{b_1},s_{b_k})\frac{S_n(L-r_{b_1})S_n(L-s_{b_k})}{s^2(\lambda_n)}\,\dd r_{b_1}\dd s_{b_k}\\
+ \sum_{x_{-1}\in\cN_{x_0}\setminus\{x_1\}} \int_0^L\int_0^L K_{(x_{-1};x_{k-1})}(r_{b_0},s_{b_{k-1}})\frac{S_n(r_{b_0})S_n(s_{b_{k-1}})}{s^2(\lambda_n)}\,\dd r_{b_0}\,\dd s_{b_{k-1}}
\end{multline*}
\begin{multline*}
P_{K,n}(x_1;x_{k-1}) \\= \sum_{x_0\in \cN_{x_1}\setminus\{x_2\},x_k\in \cN_{x_{k-1}}\setminus\{x_{k-2}\}}\int_0^L\int_0^L K_{(x_0;x_k)}(r_{b_1},s_{b_k})\frac{S_n(r_{b_1})S_n(L-s_{b_k})}{s^2(\lambda_n)}\,\dd r_{b_1}\dd s_{b_k}\,,
\end{multline*}
where $K_{(x_0;x_k)} :=K_{(x_0,x_1),(x_{k-1},x_k)}$, we thus get
\begin{equation}\label{e:adecomposition}
2\langle \psi_n, K_k\psi_n\rangle = \langle \mathring{\psi}_n, (J_{K,n}+M_{K,n}+P_{K,n})_G \mathring{\psi}_n\rangle \,.
\end{equation}
On the other hand, for $\langle K\rangle$ and $S_k$ as in \eqref{e:avovk}, \eqref{eq:DefSk}, we have
\begin{equation}\label{e:anothereq}
\begin{aligned}
&\Big{\langle} \mathring{\psi}_n, \big{(}\langle J_{K,n}\rangle S_k + \langle M_{K,n}\rangle S_{k-1} + \langle P_{K,n}\rangle S_{k-2}\big{)}_G \mathring{\psi}_n \Big{\rangle} \\
&= \left[\langle J_{K,n}\rangle \Phi_{w(\lambda_n)}(k) + \langle M_{K,n}\rangle \Phi_{w(\lambda_n)}(k-1) + \langle P_{K,n}\rangle \Phi_{w(\lambda_n)}(k-2) \right] \cdot \|\mathring{\psi}_n\|^2 \\
&=: 2 \,\langle K_k\rangle_{n,L,\alpha}
\end{aligned}
\end{equation}
where in the first equality, we used \eqref{e:discreteeigen} and the fact that if $\tilde{\Phi}_k(\lambda) :=\Phi_{\lambda}(k)$, then\footnote{This is well-known~: by definitions \eqref{e:kg} and \eqref{eq:DefSk}, we have $[(S_k)_G\psi](x) = \frac{1}{(q+1)q^{k-1}}\sum_{(x_0;x_k),x_0=x} \psi(x_k)$ for $k\ge 1$ and $[(S_0)_G\psi](x)=\psi(x)$. Since $\tilde{\Phi}_0(\lambda) = 1$ and $\tilde{\Phi}_1(\lambda) = \frac{\lambda}{q+1}$, then \eqref{e:sphi} holds for $k=0,1$. Next, $[(S_{k+1})_G\psi](x) = \frac{1}{(q+1)q^k} \sum_{(x_0;x_{k+1}),x_0=x} \psi(x_{k+1}) = \frac{1}{(q+1)q^k}\sum_{(x_0;x_k),x_0=x}[(\cA_G\psi)(x_k)-\psi(x_{k-1})] = \frac{1}{q}[(\cS_k\cA_G\psi)(x) - (\cS_{k-1}\psi)(x)]$. If \eqref{e:sphi} holds for $j\le k$, then using \eqref{eq:RecursSpherical}, this yields $[\tilde{\Phi}_{k+1}(\cA_G)\psi](x)$, as asserted.}
\begin{equation}\label{e:sphi}
(S_k)_G = \tilde{\Phi}_k(\cA_G) \,.
\end{equation}

Using \eqref{e:normpsidis}, this yields the formula
\[
\langle K_k\rangle_{n,L,\alpha} = \frac{1}{N}\sum_{(b_1;b_k)\in B_k} \int_0^L \int_0^L K_{b_1,b_k}(x_{b_1},y_{b_k})\Psi_{\lambda_n,k}(x_{b_1},y_{b_k})\,\dd x_{b_1}\,\dd y_{b_k}
\]
where, for $k\ge 2$,
\begin{align}\label{e:psinlaphak}
\Psi_{\lambda_n,k}(x_{b_1},y_{b_k}) &= \frac{1}{2\,\kappa_{\lambda_n}}\bigg[\frac{S_n(L-x_{b_1})S_n(y_{b_k})}{s^2(\lambda_n)} \Phi_{w(\lambda_n)}(k) \\
& \quad + \frac{S_n(L-x_{b_1})S_n(L-y_{b_k}) + S_n(x_{b_1})S_n(y_{b_k})}{s^2(\lambda_n)}\Phi_{w(\lambda_n)}(k-1)\nonumber \\
& \quad + \frac{S_n(x_{b_1})S_n(L-y_{b_k})}{s^2(\lambda_n)}\Phi_{w(\lambda_n)}(k-2) \bigg]\,.\nonumber
\end{align}
For $k=1$, we have for $b=(x_0,x_1)$,
\begin{align}
\label{eq:psinlapha1}
\Psi_{\lambda_n,1}(x_b,y_b) & = \frac{1}{2\,\kappa_{\lambda_n}}\bigg[\frac{S_n(L-x_b)S_n(L-y_b) + S_n(x_b)S_n(y_b)}{s^2(\lambda_n)} \\
& \quad + \frac{S_n(L-x_b)S_n(y_b)+S_n(x_b)S_n(L-y_b)}{s^2(\lambda_n)}\Phi_{w(\lambda_n)}(1)\bigg]\,. \nonumber
\end{align}
These expressions are not very enlightening, so we now simplify them in terms of Green's functions. Using (\ref{eq:SphericalandResolvant}), we may express \eqref{e:psinlaphak} as

\begin{equation*}
\begin{aligned}
&\Psi_{\lambda_n,k}(x_{b_1},y_{b_k}) = \frac{1}{2\,\kappa_{\lambda_n}\Im G_{\mathbf{T}_q}^{\lambda_n}(o,o) s^2(\lambda_n) }\\
&\times \bigg[S_n(L-x_{b_1})S_n(y_{b_k}) \Im G^{\lambda_n}_{\mathbf{T}_q}(\tilde{o}_{b_1},\tilde{t}_{b_k}) 
+ S_n(L-x_{b_1})S_n(L-y_{b_k}) \Im G^{\lambda_n}_{\mathbf{T}_q}(\tilde{o}_{b_1},\tilde{o}_{b_{k}})\\
& + S_n(x_{b_1})S_n(y_{b_k})\Im G^{\lambda_n}_{\mathbf{T}_q}(\tilde{t}_{b_1},\tilde{t}_{b_k}) 
 + S_n(x_{b_1})S_n(L-y_{b_k}) \Im G^{\lambda_n}_{\mathbf{T}_q}(\tilde{t}_{b_1},\tilde{o}_{b_k}) \bigg]\,.
 \end{aligned}
\end{equation*}
Noting that $\tilde{y}_{b_k}\mapsto \Im G_{\mathbf{T}_q}^{\lambda}(v,\tilde{y}_{b_k})$ is an eigenfunction of $H_{\mathbf{T}_q}$, we see as in \eqref{e:psinegal} that
\[
\Im G^{\lambda_n}_{\mathbf{T}_q}(v,\tilde{y}_{b_k}) = \frac{S_n(L-y_{b_k})\Im G^{\lambda_n}_{\mathbf{T}_q}(v,\tilde{o}_{b_k}) + S_n(y_{b_k})\Im G^{\lambda_n}_{\mathbf{T}_q}(v,\tilde{t}_{b_k})}{s(\lambda_n)} \,,
\]
so we deduce that for $\tilde{x}_{b_1}\in \tilde{b}_1$,
\begin{equation}\label{e:psixy}
\begin{aligned}
&\Psi_{\lambda_n,k}(x_{b_1},y_{b_k}) \\
&= \frac{1}{2\kappa_{\lambda_n} \Im G_{\mathbf{T}_q}^{\lambda_n}(o,o)} \frac{S_n(L-x_{b_1})\Im G^{\lambda_n}_{\mathbf{T}_q}(\tilde{o}_{b_1},\tilde{y}_{b_k}) + S_n(x_{b_1})\Im G^{\lambda_n}_{\mathbf{T}_q}(\tilde{t}_{b_1},\tilde{y}_{b_k})}{s(\lambda_n)} \\
&= \frac{1}{2\kappa_{\lambda_n}}\cdot \frac{\Im G^{\lambda_n}_{\mathbf{T}_q}(\tilde{x}_{b_1},\tilde{y}_{b_k})}{\Im G_{\mathbf{T}_q}^{\lambda_n}(o,o)} \,,
\end{aligned}
\end{equation}
where we used that the first argument of $\Im G_{\mathbf{T}_q}^{\lambda}(\cdot,\cdot)$ is also an eigenfunction. This gives the expression in \eqref{e:psienfin}. Similarly, $\Psi_{\lambda_n,1}(x_b,y_b) =  \frac{1}{2\kappa_{\lambda_n}}\cdot \frac{\Im G^{\lambda_n}_{\mathbf{T}_q}(x_b,y_b)}{\Im G_{\mathbf{T}_q}^{\lambda_n}(o,o)}$. In other words, $\langle K_k\rangle_{n,L,\alpha} = \langle K_k\rangle_{\lambda_n}$. Recalling \eqref{e:adecomposition} and \eqref{e:anothereq}, we thus get
\begin{multline}
  \label{eq:new_bound}
\frac{1}{N(I)}\sum_{\lambda_n\in I} \left|\langle \psi_n, K_k\psi_n\rangle - \langle K_k\rangle_{\lambda_n}\right|^2 \le \vari(J_K^{\lambda}-\langle J_K^{\lambda} \rangle S_k) \\+ \vari(M_K^{\lambda}-\langle M_K^{\lambda}\rangle S_{k-1}) + \vari(P_K^{\lambda} - \langle P_K^{\lambda}\rangle S_{k-2})\,,
\end{multline}
where $J_K^{\lambda_n} := J_{K,n}$, $M_K^{\lambda_n} :=M_{K,n}$ and $P_K^{\lambda_n} :=P_{K,n}$.

\smallskip

From here, we argue as before. Namely, we reduce these discrete variances to non-backtracking ones, apply the invariance relation involving $\cR_{n,r}^{\lambda}$, and bound the variances by an HS norm. The only difference is that the variance of the remainders $\vari(\widetilde{\cS}_T[J - \langle J\rangle S_k])$ should now be controlled for general $k$ in Lemma~\ref{lem:HSgen}, not just $k=0,1$. Using the same ideas of the proof of this lemma, we arrive at a bound of the form
\[
\vari(K^{\lambda}) \lesssim \frac{c}{\pi N(I)} \int_{w(I)} \chi(m)\sum_{x_0\in V} \sum_{x_k,y_k\in S(x_0,k)} J^{m}(x_0;x_k)\overline{J^{m}(x_0;y_k)}\Im G^m(\tilde{x}_k,\tilde{y}_k)\,\dd m\,,
\]
where $S(x_0,k) = \{d(y,x_0)=k\}$. We may just bound this using $|\Im G^m(\tilde{x}_k,\tilde{y}_k)| \le c_k$, so the sum becomes $\sum_{x_0}|\sum_{x_k\in S(x_0,k)} |J^m(x_0;x_k)||^2 \le (q+1)q^{k-1} \sum_{(x_0;x_k)\in B_k} |J^m(x_0;x_k)|^2$. Of course, such a crude bound would be problematic if we needed to take $k\to \infty$ as in the control of the non-backtracking quantum variance (where $k$ is replaced by $n+k$, $n\to \infty$). In this case the expression should be bounded with more caution (in Lemma~\ref{lem:HSgen} for example, we used the recursive formula \eqref{e:rec2} to rule out off-diagonal terms, thereby avoiding Cauchy-Schwarz). Here however, $k$ is fixed and is the same as the one in $K_k$.

\bigskip

\noindent
{\bf{Acknowledgements:}} M.I.\ was funded by the LabEx IRMIA, and partially supported by the Agence Nationale de la Recherche project GeRaSic
(ANR-13-BS01-0007-01).\\
  M.S.\ was supported by a public grant as part of the \textit{Investissement 
d'avenir} project, reference ANR-11-LABX-0056-LMH, LabEx LMH.

We thank Nalini Anantharaman for suggesting this problem to us.

\appendix
\section{Some Green's function identities}\label{app:a}
\subsection{Relationship between resolvents on the quantum and combinatorial
trees}\label{app:a1}
Lemma~\ref{lem:Greenkernel} can be in principle derived from the results of 
\cite[Section 4]{Car97}.  Given that we use it in a crucial way, we
present here a self-contained proof.

The first step is to calculate $G^{\gamma}_{\mathbf{T}_q}(o_b,o_b)$ for some
bond $b\in \mathbf{T}_q$. Indeed by \eqref{eq:greensfunction}
we have 
\begin{equation*}
G^{\gamma}_{\mathbf{T}_q}(o_b,o_b) =  \frac{U_{\gamma;v}^-(o_b) V_{\gamma;o}^+(o_b)}{W^{\gamma}_{v,o}(o_b)}
\end{equation*}
for $\gamma\notin\sigma(H_{\mathbf{T}_q})$. If $b\in \cB^nb_o \cap \cB^{\ast m} b_v$, where $b_o$ and $b_v$ are the edges containing $o,v$ respectively, we get $U_{\gamma;v}^-(o_b) = \mu^-(\gamma)^mU_{\gamma}(0)$, $V_{\gamma;o}^+(o_b) = \mu^-(\gamma)^nV_{\gamma}(0)$ and $W^{\gamma}_{v,o}(o_b) = (\mu^-(\gamma))^{m+n} W^{\gamma}$, where $W^{\gamma}=V^{\gamma}(0)U_{\gamma}'(0) - V_{\gamma}'(0)U_{\gamma}(0) = s(\gamma)(1-(q\mu^+(\gamma))^2)$. Hence, 
\begin{multline}\label{eq:5}
G^{\gamma}_{\mathbf{T}_q}(o_b,o_b) = \frac{U_{\gamma}(0) V_{\gamma}(0)}{W^{\gamma}} = \frac{-s(\gamma) q\mu^+(\gamma)s(\gamma)}{s(\gamma)[1-(q\mu^+(\gamma))^2]} \\
= \frac{-s(\gamma)}{\mu^-(\gamma) - q\mu^+(\gamma)}  = \frac{-s(\gamma)}{(q+1)\mu^-(\gamma) - w(\gamma)}\,, 
\end{multline}
where we used \eqref{e:muplusmoins}. More generally, if $(v_0,\dots,v_k)$ is a non-backtracking path of vertices in $\T_q$, then we can take $o=v_0$, $v=v_k$ and
\begin{align*}
G^{\gamma}_{\mathbf{T}_q}(v_0,v_k) = \frac{U_{\gamma;v}^-(v_0) V_{\gamma;o}^+(v_k)}{W^{\gamma}_{v,o}(v_0)} &= (\mu^-(\gamma))^k \frac{U_{\gamma}(v_0)V_{\gamma}(v_0)}{W^{\gamma}_{v,o}(v_0)} = \frac{-s(\gamma)(\mu^-(\gamma))^k}{(q+1)\mu^-(\gamma) - w(\gamma)}
\end{align*} 
We thus showed that for any $\gamma\in \C^+$,
\begin{equation}\label{e:greenproof}
G_{\mathbf{T}_q}^{\gamma}(v,w) = -s(\gamma)\frac{(\mu^-(\gamma))^{d(v,w)}}{(q+1)\mu^-(\gamma)-w(\gamma)} = -s(\gamma)G_{\cA_{\T_q}}^{w(\gamma)}(v,w) \,,
\end{equation}
where the last equality is by the classic form of the Green's function of $\cA_{\T_q}$, see e.g. \cite[Chapter 1]{FTS} or \cite[Lemma 2.1]{AS}. Note that from \eqref{e:wlambda}, $\mu^{\pm}(\gamma)$ coincides with the quantity $\eps_{\pm}(w(\gamma))$ in \cite[Section 6]{A}, both being the solutions of $qx^2 - w(\gamma)x+1=0$.

If $\gamma = \lambda+i\eta$ with $\lambda\in \sigma_{\mathrm{ac}}(H_{\mathbf{T}_q})$, then taking $\eta \downarrow 0$ in \eqref{e:greenproof}, we get $G_{\mathbf{T}_q}^{\lambda+\ii 0}(v,w) = -s(\lambda)\frac{(\mu^-(\lambda))^{d(v,w)}}{(q+1)\mu^-(\lambda)-w(\lambda)}$. This coincides with $-s(\lambda)G_{\cA_{\T_q}}^{w(\lambda)+\ii 0}(v,w)$, proving the lemma\footnote{Note that $-s(\lambda)\frac{(\mu^-(\lambda))^{d(v,w)}}{(q+1)\mu^-(\lambda)-w(\lambda)}$ is well-defined for $\lambda \in \sigma_{\mathrm{ac}}(H_{\mathbf{T}_q})$. In fact, the denominator has a non-zero imaginary part in this case, so in particular it never vanishes. In general, it is easy to see that this fraction can have a pole only if $w(\lambda) = \pm (q+1)$. If $\alpha=0$, this only occurs for $\lambda\in \sigma_2$. Hence, $G_{\mathbf{T}_q}^{\lambda+\ii 0}(v,w)$ exists more generally for $\lambda\in \R\setminus \sigma_2$ in this case and equals $-s(\lambda)G^{w(\lambda)+\ii0}(v,w)$.}.

\subsection{Limiting measures and Green's functions}\label{app:a2}
If we re-write $U_\gamma(x)$ and $V_\gamma(x)$ in the basis $\{S_{\gamma}(x),S_{\gamma}(L-x)\}$, we find as in \eqref{e:psinegal},
\begin{displaymath}
  \begin{aligned}
    V_\gamma(x) &= q \mu^+(\gamma) S_{\gamma}(L-x) + S_{\gamma}(x) \,,\\
    U_\gamma(x) &= -\left( S_{\gamma}(L-x) + q \mu^+(\gamma) S_\gamma(x) \right),
  \end{aligned}
\end{displaymath}
If $x,y$ belong to the same bond $b=(o,v)$, and supposing that $y\in\mathbf{T}^+_x$, we have by \eqref{eq:greensfunction}, 
\begin{align*}
  G_{\mathbf{T}_q}^\gamma(x,y) &= \frac{\mu^-(\gamma)}{\mu^-(\gamma)} \cdot \frac{U_{\gamma}(x)V_{\gamma}(y)}{W^{\gamma}} \\
&=  - \frac{(\mu^-(\gamma) S_{\gamma}(L-x) + S_{\gamma}(x)) (q \mu^+(\gamma)S_{\gamma}(L-y) + S_{\gamma}(y))}{s(\gamma) [(q+1)\mu^-(\gamma)-w(\gamma)]}, 
\end{align*}
using \eqref{e:muplusmoins}. Taking $\gamma=\lambda+\ii 0$ with $\lambda\in \sigma_{\mathrm{ac}}(H_{\mathbf{T}_q})$ and recalling that $G^{\lambda}_{\mathbf{T}_q}(o,o) = \frac{-s(\lambda)}{(q+1)\mu^-(\lambda)-w(\lambda)}$ by \eqref{eq:5}, we get
\begin{multline*}
G_{\mathbf{T}_q}^\lambda(x,y) =  \frac{G_{\mathbf{T}_q}^\lambda(o,o)}{s^2(\lambda)} \Big( S_{\lambda}(L-x)S_{\lambda}(L-y) + S_{\lambda}(x)S_{\lambda}(y) \\
 + \mu^-(\lambda)S_{\lambda}(L-x)S_{\lambda}(y) + q\mu^+(\lambda)S_{\lambda}(x)S_{\lambda}(L-y) \Big)\,.
\end{multline*}
Hence,
\begin{multline*}
\Im G^{\lambda}_{\mathbf{T}_q}(x,y) = \frac{\Im G_{\mathbf{T}_q}^{\lambda}(o,o)}{s^2(\lambda)} \bigg(S_{\lambda}(L-x)S_{\lambda}(L-y) + S_{\lambda}(x)S_{\lambda}(y) \\
+ \frac{w(\lambda)}{2q}S_{\lambda}(L-x)S_{\lambda}(y) + \frac{w(\lambda)}{2}S_{\lambda}(x)S_{\lambda}(L-y) \bigg) \\
+ \frac{\Re G_{\mathbf{T}_q}^{\lambda}(o,o)}{s^2(\lambda)}\Big(\Im \mu^-(\lambda)S_{\lambda}(L-x)S_{\lambda}(y) + \Im q\mu^+(\lambda)S_{\lambda}(x)S_{\lambda}(L-y)\Big) \,.
\end{multline*}
Now $\Im G^{\lambda}_{\mathbf{T}_q}(o,o) = \frac{(q+1)\Im\mu^-(\lambda)s(\lambda)}{|(q+1)\mu^-(\lambda)-w(\lambda)|^2}$ and $\Re G^{\lambda}_{\mathbf{T}_q}(o,o) = \frac{q-1}{2q}\frac{w(\lambda)s(\lambda)}{|(q+1)\mu^-(\lambda)-w(\lambda)|^2}$. Hence, $\Re G^{\lambda}_{\mathbf{T}_q}(o,o) = \frac{q-1}{2q}\frac{w(\lambda)}{(q+1)\Im \mu^-(\lambda)} \Im G^{\lambda}_{\mathbf{T}_q}(o,o)$. We thus get
\begin{multline}
\label{eq:greenPsi}
\Im G_{\mathbf{T}_q}^{\lambda}(x,y) = \frac{\Im G_{\mathbf{T}_q}^{\lambda}(o,o)}{s(\lambda)^2}
\bigg( S_{\lambda}(L-x)S_{\lambda}(L-y) + S_{\lambda}(x)S_{\lambda}(y) \\
 + \frac{w(\lambda)}{2q}S_{\lambda}(L-x)S_{\lambda}(y) + \frac{w(\lambda)}{2} S_{\lambda}(x)S_{\lambda}(L-y)  \\
+ \frac{(q-1)w(\lambda)}{2q(q+1)}  S_{\lambda}(L-x)S_{\lambda}(y) - \frac{q-1}{2(q+1)}w(\lambda) S_{\lambda}(x)S_{\lambda}(L-y)  \bigg)\\
= \frac{\Im G_{\mathbf{T}_q}^{\lambda}(o,o)}{s(\lambda)^2} \bigg( S_{\lambda}(L-x)S_{\lambda}(L-y) + S_{\lambda}(x)S_{\lambda}(y)   \\
+ \frac{w(\lambda)}{q+1} \left[ S_{\lambda}(L-x)S_{\lambda}(y) + 
S_{\lambda}(x)S_{\lambda}(L-y)\right]\bigg), 
\end{multline}
which recovers $\Psi_{\lambda_n,1}(x,y) = \frac{1}{2\kappa_{\lambda_n}}\cdot 
\frac{\Im G^{\lambda_n}_{\mathbf{T}_q}(x,y)}
{\Im G_{\mathbf{T}_q}^{\lambda_n}(o,o)}$ from \eqref{eq:psinlapha1}.
Because the final expression in \eqref{eq:greenPsi} is symmetric in $x$ and
$y$, it holds for $y \in \mathbf{T}_x^-$ too.

By a similar method we may also recover $\Psi_{\lambda_n,k}(x_{b_1},y_{b_k})$ with the help of identity \eqref{eq:SphericalandResolvant}, thus obtaining an alternative proof of \eqref{e:psixy}.

If we specialize to $y=x$ in \eqref{eq:greenPsi}, we get
\begin{equation}\label{eq:2}
\frac{\Im G_{\mathbf{T}_q}^{\lambda+ \mathrm{i}0}(x,x)}{\Im G_{\mathbf{T}_q}^{\lambda+\mathrm{i}0}(o,o)} = \frac1{s(\lambda)^2} \left( S_{\lambda}^2(L-x) + S_{\lambda}^2(x) + \frac{2w(\lambda)}{q+1} S_{\lambda}(L-x)S_{\lambda}(x)\right).
\end{equation}

In particular, recalling \eqref{e:kappalambda}, we obtain
\begin{equation}\label{eq:3}
\int_0^L \Psi_{\lambda_n}(x)\,\dd x = \frac{1}{\kappa_{\lambda_n}} \int_0^L \frac{\Im G^{\lambda_n+\ii0}_{\mathbf{T}_q}(x,x)} {\Im G_{\mathbf{T}_q}^{\lambda_n+\ii0}(o,o)}\,\dd x = \frac{2}{q+1} \,.
\end{equation}

Note that, when $\lambda\in \sigma_1$, we have by (\ref{e:spechh}) that 
\begin{equation}\label{eq:2bis}
\frac{\Im G_{\mathbf{T}_q}^{\lambda+ \mathrm{i}0}(x,x)}{\Im G_{\mathbf{T}_q}^{\lambda+\mathrm{i}0}(o,o)} \geq \frac1{s(\lambda)^2} \left( S_{\lambda}^2(L-x) + S_{\lambda}^2(x) - \frac{4\sqrt{q}}{q+1} S_{\lambda}(L-x)S_{\lambda}(x)\right) > 0,
\end{equation}
since $\frac{4\sqrt{q}}{q+1} <2$ for any $q$.  The inequality in \eqref{eq:2bis}
is strict since $S_\lambda(L-x)$ and $S_\lambda(x)$ cannot simultaneously
be $0$ as they are independent solutions to a linear ODE.

If we consider the special case where $\alpha=0$, $U\equiv0$ we find the
interesting property that $\Im G_{\mathbf{T}_q}^{\lambda+ \mathrm{i}0}(x,x)$
is constant as a function of $x$.  Indeed we have then $S_{\lambda}(x)=\frac{\sin(
\sqrt{\lambda}x)}{\sqrt{\lambda}}$ and $w(\lambda)=(q+1)\cos(\sqrt{\lambda}L)$, so \eqref{eq:2} yields
\begin{align*}
\frac{\Im G_{\mathbf{T}_q}^{\lambda+ \mathrm{i}0}(x,x)}
  {\Im G_{\mathbf{T}_q}^{\lambda+\mathrm{i}0}(o,o)}
&= \frac1{\sin^2(\sqrt{\lambda}L)}
\Big( \left(\sin(\sqrt{\lambda}L) \cos(\sqrt{\lambda}x) -
 \sin(\sqrt{\lambda}x) \cos(\sqrt{\lambda}L) \right)^2 + \sin^2(\sqrt{\lambda}x)
\\&\quad 
+2\cos(\sqrt{\lambda}L) \left( \sin(\sqrt{\lambda}L) \cos(\sqrt{\lambda}x) -
 \sin(\sqrt{\lambda}x) \cos(\sqrt{\lambda}L)\right) \sin(\sqrt{\lambda}x)
\Big)\\
&= \frac1{\sin^2(\sqrt{\lambda}L)} \big( \sin^2(\sqrt{\lambda}L) 
\cos^2(\sqrt{\lambda}x) - \sin^2 (\sqrt{\lambda}x) \cos^2(\sqrt{\lambda}L)
+ \sin^2(\sqrt{\lambda}x)\big)\\
&= 1.
\end{align*}
Using \eqref{eq:3}, we get $\kappa_{\lambda_n} = \frac{q+1}2L$ in this case. Hence, we deduce that the limiting measure in the special case $\alpha=0$, $U\equiv0$
is the uniform measure~:
\begin{equation}
\label{eq:4}
\Psi_{\lambda_n}(x)
= \frac2{L(q+1)}.
\end{equation}


\providecommand{\bysame}{\leavevmode\hbox to3em{\hrulefill}\thinspace}
\providecommand{\MR}{\relax\ifhmode\unskip\space\fi MR }
\providecommand{\MRhref}[2]{%
  \href{http://www.ams.org/mathscinet-getitem?mr=#1}{#2}
}
\providecommand{\href}[2]{#2}

\end{document}